%
%
%
\documentclass[12pt]{amsart}
\usepackage{amssymb,mathrsfs,graphicx,latexsym}
\usepackage{fullpage}
\AtEndDocument{\vfill\eject\batchmode}
%
%
\newtheorem{thm}{Theorem}
\newtheorem{cor}{Corollary}[section]
\newtheorem{lemma}{Lemma}[section]
\newtheorem{prop}{Proposition}[section]

\theoremstyle{definition}
\newtheorem{defn}{Definition}[section]

\newtheorem{remark}{Remark}[section]
\newtheorem{example}{Example}[section]
\newtheorem{con}{Conjecture}
\makeatletter
\DeclareSymbolFont{script}{U}{eus}{m}{n}
\DeclareMathSymbol{\Wedge}{0}{script}{"5E}
\DeclareSymbolFont{rmslops}{OT1}{cmr}{m}{sl}
\DeclareSymbolFontAlphabet{\mathrmsl}{rmslops}
\def\operator@font{\mathgroup\symrmslops}
\makeatother

\newcommand{\liealg}[2]{\newcommand{#1}{{\mathfrak{#2}}}}
\newcommand{\C}{\ensuremath{\mathbb{C}}}
\newcommand{\R}{\ensuremath{\mathbb{R}}}
\newcommand{\CP}{\ensuremath{\mathbb{CP}}}

\newcommand{\Z}{\ensuremath{\mathbb{Z}}}

\renewcommand{\H}{\ensuremath{\mathbb{H}}}
\renewcommand{\O}{\ensuremath{\mathbb{O}}} 

\liealg\gl{gl}\liealg\sgl{sl}\liealg\symp{sp}
\liealg\g{g}\liealg\p{p}
\newcommand{\bdm}{\begin{displaymath}}
\newcommand{\edm}{\end{displaymath}}
\def\haken{\mathbin{\hbox to 6pt{%
                 \vrule height0.4pt width5pt depth0pt
                 \kern-.4pt
                 \vrule height6pt width0.4pt depth0pt\hss}}}

\DeclareMathOperator{\End}{End}

\DeclareMathOperator{\spanu}{span}

\DeclareMathOperator{\id}{id}

\newcommand{\ra}[1]{{\raise6pt\hbox{$#1$}}}
%

\begin{document}
\title[On nearly  K\"ahler $6$-manifolds]{$S^6$ and the geometry of nearly  K\"ahler $6$-manifolds}

\author[I. Agricola, A.W. Bor\'owka]{Ilka Agricola, Aleksandra Bor\'owka, 
Thomas Friedrich}
\address{\hspace{-5mm} 
Ilka Agricola\newline
Fachbereich Mathematik und Informatik \newline
Philipps-Universit\"at Marburg\newline
Hans-Meerwein-Strasse \newline
D-35032 Marburg, Germany\newline
{\normalfont\ttfamily agricola@mathematik.uni-marburg.de}}
\address{\hspace{-5mm} 
Aleksandra Bor\'owka\newline
Faculty of Mathematics and Computer Science\newline
Jagiellonian University\newline
ul. Lojasiewicza 6\newline 
30-348 Krakow, Poland\newline
{\normalfont\ttfamily aleksandra.borowka@uj.edu.pl}}
\address{\hspace{-5mm} 
Thomas Friedrich\newline
Institut f\"ur Mathematik \newline
Humboldt-Universit\"at zu Berlin\newline
Sitz: WBC Adlershof\newline
D-10099 Berlin, Germany\newline
{\normalfont\ttfamily friedric@mathematik.hu-berlin.de}}
%
\begin{abstract}
We review results on and around the almost complex structure on $S^6$, both 
from a classical and a modern point of view. These notes have been 
prepared for the Workshop ``(Non)-existence of complex structures on $S^6$'' (\emph{Erste Marburger Arbeitsgemeinschaft Mathematik -- MAM-1}), held in Marburg in March 2017.
\end{abstract}
\maketitle
%
\section{Introduction}
%
It is well known that the sphere $S^6$ admits an almost Hermitian structure induced by octonionic 
multiplication, and that this structure stems from the transitive action of the compact
exceptional Lie group $G_2$ on it.
In 1955, Fukami and Ishihara were presumably the first authors to devote a 
separate paper to the detailed investigation of $S^6$  and showed in particular that $S^6$ is the naturally 
reductive space $G_2/\mathrm{SU}(3)$ \cite{FI55}. In 1958, Calabi studied hypersurfaces in the 
space of imaginary octonions and proved that the induced  almost complex structure is never
integrable if the hypersurface is compact \cite{Cal58}. In fact the almost Hermitian structure on the 
$6$-sphere is a very special one: Already in \cite{FI55}, it is observed that
the Levi Civita derivative of $J$ satisfies 
\bdm\tag{$*$}
(\nabla^g_XJ)X\ =\ 0 \text{ for all vector fields }X. 
\edm
Such manifolds are called \emph{nearly K\"ahler} and they were investigated  intensively 
by Gray in a series of papers \cite{Gray66,Gray70,G76}.  In particular, 
he showed in  dimension $6$ that they are Einstein and their first Chern class vanishes.
In fact, for many reasons  dimension 
$6$ is of particular interest for nearly K\"ahler geometry \cite{Nagy10}. For a long time the only 
compact examples of nearly 
K\"ahler manifolds were the four homogeneous examples: $S^6$, $S^3\times S^3$, $\CP^3$ and 
the flag manifold $F_2$. 
The aim of this paper is to provide a concise review of properties of nearly K\"ahler manifolds in 
dimension $6$ with special attention given to the sphere $S^6$. After some historical remarks,
we start by recalling Calabi's 
result about hypersurfaces in the space of imaginary octonions $\R^7$. Then we discuss the 
intrinsic torsion approach and naturally reductive spaces and briefly recall Gray's \cite{G76} 
and Kirichenko's \cite{Kir77} results. 
Next we 
present the spinoral approach of R. Grunewald \cite{Gru90} and a modern view on it.
We finish by giving an 
overview of L. Foscolo and M. Haskins contribution \cite{FH17}. They discovered
non-homogeneous cohomogeneity one 
nearly K\"ahler structures on $S^6$ and conjectured that these are the only cohomogeneity one examples.

\section{Some historical comments}
Clearing the facts around the almost complex structure on $S^6$ took several independent steps.
In particular, it was not noticed immediately that (and how) 
it was related to the transitive action of $G_2$.

Montgomery and Samelson proved in 1943 that the only compact connected simple Lie group which can be 
transitive on $S^{2n}$ is $\mathrm{SO}(2n+1)$---\emph{except for a a finite number of $n's$}
\cite[Thm II, p.462]{Montgomery&S43}.  Their method was of topological nature and required 
the knowledge of the
homology rings of simple Lie groups, which was not yet available for the  five exceptional simple
Lie groups; hence they couldn't give any further information on the exceptional cases.

Six years later, Armand Borel proceeded by constructing the homogeneous spaces directly, which 
lead him to the result that the only sphere with a transitive group $G$ acting that is not 
orthogonal is $S^6$ with  $G=G_2$ \cite[Thm III, p. 586]{Borel49}. This completed the classification
of transitive sphere actions and showed, in particular, that $G_2$ is the only exceptional Lie group with
such an action.

Meanwhile, Adrian Kirchhoff had noticed in 1947 that $S^6$ carries an almost complex structure
induced from octonionic multiplication (\cite{Kirchhoff47}; see also \cite{Ehresmann50}). 
In his main theorem, Kirchhoff related 
the existence of an almost complex structure on $S^n$ to the parallelism of $S^{n+1}$. 
 
In 1951,  Ehresmann and Libermann \cite{EL51} as well as Eckmann and Fr\"olicher \cite{Eckmann&F51}
observed independently that this almost complex structure on $S^6$ is not integrable --- in fact, their 
articles appeared in the same volume of the Comptes Rendus Hebdomadaires des S\'eances de 
l'Acad\'emie des Sciences, Paris. While Eckmann and Fr\"olicher were  interested in formulating the
integrability condition and treated $S^6$ merely as an example where it didn't hold, 
the aim of Ehresmann and Libermann was the local description 
of locally homogeneous almost hermitian manifolds in terms of Cartan structural equations, and they 
found that the equations exhibited an exceptional structure for  $n=6$. They stated \cite[p. 1282]{EL51}:
 
\begin{quote}
``La structure consid\'er\'ee est donc localement \'equivalente \`a une structure presque hermitienne
sur $S^6$ admettant $G_2$ comme groupe d'automorphismes. Ce groupe ne peut laisser
invariante sur $S^6$ qu'une seule structure presque complexe. Celle-ci est donc
isomorphe \`a la structure presque complexe d\'efinie \`a l'aide des octaves de
Cayley. Comme la deuxi\`eme torsion dans les formules (5) n'est pas nulle,
cette structure ne d\'erive pas d'une structure complexe.''\footnote{
 ``The structure we considered is therefore locally equivalent to an almost hermitian
 structure on $S^6$ admitting $G_2$ as its group of automorphisms. This group can
 leave invariant only one almost complex structure on $S^6$. It is therefore isomorphic
 to the almost complex structure defined with the help of Cayley's octonions. Since the
 second torsion of the formulas (5) doesn't vanish, this structure is not induced from a complex
 structure.'' (translated by the authors)}
\end{quote}

Hence, they seem to be the first authors to connect the transitive $G_2$-action on $S^6$ to 
its octonionic almost  complex structure.
A detailed account of the results of \cite{Eckmann&F51} and further material was given by 
Fr\"olicher four years later \cite{Froelicher55}; however, in his discussion of
homogeneous  almost complex manifolds, he doesn't mention $S^6$. Remarkably, he described already
(as did \cite{FI55}) the characteristic connection of $S^6$ and proved that its torsion is given by the 
Nijenhuis tensor.

In \cite{FI55}, all these thoughts on $S^6$ are brought together for the first time, and the characteristic 
connection is proved to coincide with the canonical connection of the homogeneous 
space $G_2/\mathrm{SU}(3)$.

The first author to suggest the investigation of manifolds satisfying the abstract nearly K\"ahler condition $(*)$ was Tachibana in \cite{Tachibana59},
who called such manifolds $K$-spaces and proved, amongst other things, that their Nijenhuis tensor is totally
antisymmetric. No examples were discussed, although it is clear from the reference made to
\cite{FI55, Froelicher55} that the inspiration came from $S^6$. The paper \cite{Koto60} continued
the investigation of $K$-spaces.

Inspired by the papers of Calabi \cite{Cal58} and Koto \cite{Koto60},
Alfred Gray used in 1966 for the first time the term \emph{nearly K\"ahler manifold} \cite{Gray66}.
He writes in the introduction: 
\begin{quote}
``The manifolds we discuss include complex and almost K\"ahler manifolds; 
also $S^6$ with the almost complex structure derived from the Cayley
numbers falls into a class of manifolds which we call nearly K\"ahlerian.'' 
\end{quote}
This was the starting point of the career of $S^6$ as a most remarkable nearly K\"ahler manifold.
Surprisingly, most classes of almost Hermitian manifolds that were systemized later in the
Gray-Hervella classification \cite{GH80} appear already in this paper.

\section{The almost complex structures induced from octonions}
%
In this section we present an explicit approach for constructing the nearly K\"ahler structure on $S^6$. 
The construction goes back to Calabi \cite{Cal58}, who studied hypersurfaces in $\R^7$ with a 
complex structure induced from octonions.
\subsection{Seven-dimensional cross products} 
Recall that the octonion algebra $\O$ is the unique $8$-dimensional composition algebra (or equivalently 
normed division algebra). It can be defined from quaternions using the Cayley-Dickson construction
$\O =\H \oplus J\H$, with the following operations: 
\bdm 
\overline{q_1+Jq_2}=\overline{q_1}-Jq_2,\quad (q_1+Jq_2)(q_3+Jq_4)
=q_1q_3+\overline{q_4}q_2+J(q_2\overline{q_3}+q_4q_1).
\edm
Consequently, the octonions can be viewed as an $8$ dimensional (non-associative) algebra with  basis 
$1,e_1,\ldots, e_7$, where $e_i$ are imaginary units and the multiplication between them is defined above. 
Note that we can take $e_1, e_2, e_3$ to be imaginary quaternions. 
Consider the vector subspace of imaginary octonions $\mathcal{Y}:=\spanu \{e_1,\ldots, e_7\}$ 
which, as a vector space, is isomorphic to $\R^7$. 
The octonion multiplication induces (by restriction and projection) a cross product on 
$\mathcal{Y}$ via the formula
\bdm
A\times B:=\frac{1}{2}(AB \, - \, BA) \, .
\edm
\noindent
The group of automorphisms of 
$\mathcal{Y}$ is the exceptional group $G_2$. We strongly recommend the
article   by Cristina Draper in this volume for a very thorough description of $G_2$ and its
relation to octonions, cross products, and spinors \cite{Dr17}. 
The vector space $\mathcal{Y}$ together with the cross product and the scalar product is sometimes 
called the Cayley space. The cross product has the following properties:
\begin{itemize}
\item  $\langle A , (B\times C)\rangle = \langle(A\times B) , C \rangle=:(ABC)$ (\emph{scalar triple product
identity}),
\item $A\times(A\times B)=-|A|^2B+ \langle A , B \rangle A$ (\emph{Lagrange} or \emph{Malcev identity}),
\item If C is orthogonal to $A$ and $B$, then
\bdm
(A \times B) \times C = A \times (B \times C) - \langle A , B \rangle C \, .
\edm
\end{itemize}
\noindent
However, there are some significant differences between dimensions $3$ and $7$ coming from the
non-associativity of octonions---for example, the Jacobi identity does not hold in dimension $7$.
\subsection{Almost complex structure on hypersurfaces of the Cayley space}
The results  described in this section are mainly due to  Calabi  \cite{Cal58}. 
Let $S$ be a $6$-dimensional oriented manifold immersed into the Cayley space $\mathcal{Y}$.
The canonical orientation on $\mathcal{Y}$ induces a normal vector field on $S$, called $N$. 
Consider its second fundamental form and denote by $K$ its  shape operator.
The eigenvalues of $K$ are just the  principal curvatures. We define $J\in \End (TS)$ by 
\bdm
J(X)\ :=\ N \times X  , \quad X \in TS  ,  
\edm
and take $g$ to be the metric 
on $S$ induced by the pull back of the scalar product on $\mathcal{Y}$.
\begin{lemma}
$J$ is an almost complex structure on $S$ such that $(S,J,g)$ is an almost Hermitian manifold,
and it satisfies the identity 
\bdm
\langle (\nabla^g_X J)(Y) , Z \rangle \ = \ \langle K(X) \times Y , Z \rangle .
\edm
\end{lemma}
\begin{proof}
First we need to show that $J^2=-\id$. By the Malcev identity,  
$N\times(N\times X)=-|N|^2X+\langle N , X \rangle N,$ for any $X\in (TS)$. As $X$ is perpendicular to $N$,
this  proves the claim. Then, using the scalar triple product and 
Malcev identities we obtain
\bdm
\langle J(X), J(Y) \rangle= \langle N\times X , N\times Y \rangle
= \langle N\times X)\times N , Y \rangle
= - \langle N\times (N\times X) , Y \rangle = \langle X , Y \rangle \ ,
\edm
showing the $J$-invariance of the metric. Finally,
let us compute $(\nabla^g_X J)(Y)$ for any tangent vectors $X,Y$:
\begin{eqnarray*}
(\nabla^g_X J)(Y) &=& \nabla^g_X(J(Y)) - J(\nabla^g_XY) = \nabla^g_X(N \times Y) - N \times \nabla^g_X Y \\
&=& \nabla^g_X N \times Y + N \times \nabla^g_X Y - N \times \nabla^g_X Y
= K(X) \times Y \ .
\end{eqnarray*}
This yields the claimed formula for  $\nabla^g_X J$.
\end{proof}
\begin{defn}
Let $(M,g,J)$ be an almost Hermitian manifold. 
If $(\nabla^g_XJ)X=0$ and $\nabla^g J \neq 0$, then $(M,g,J)$ is called nearly K\"ahler manifold.
\end{defn}
\noindent
The almost complex structure of a nearly K\"ahler manifold is never integrable, see
\cite{G76} or \cite{AFS05}. In fact, an easy calculation shows that its Nijenhuis tensor is
given by $N(X,Y)=4 (\nabla^g_XJ)JY$.
A direct consequence of the last lemma is the following fact.
\begin{prop}
Let $g$ be the standard metric on $S^6$ and $J$ the almost complex structure defined by the 
cross product on the Cayley space. Then $(S^6,g,J)$ is a nearly K\"ahler manifold.
\end{prop}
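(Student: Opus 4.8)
The plan is to feed $S=S^6$, the unit sphere in the Cayley space $\mathcal{Y}\cong\R^7$, into the preceding lemma; the whole point is that the round sphere is totally umbilic, so its shape operator is a constant multiple of the identity and the formula for $\nabla^g J$ collapses to a bare cross product.

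First I would pin down the shape operator. At a point $p\in S^6$ the unit normal is just the position vector $N=p$, and the ambient derivative of the position field is $\nabla^{\R^7}_X N=X$; hence the Weingarten map is $K=-\id$ (the sign being fixed by the chosen orientation and normal). In particular every principal curvature equals $-1$, so $S^6$ is umbilic. Substituting $K(X)=-X$ into the identity $\langle(\nabla^g_X J)(Y),Z\rangle=\langle K(X)\times Y,Z\rangle$ from the lemma and using nondegeneracy of $g$ then gives $(\nabla^g_X J)(Y)=-\,X\times Y$ for all tangent vectors $X,Y$.

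Next I would check the two defining conditions for a nearly K\"ahler manifold. Putting $Y=X$ and using that the cross product is alternating, so that $X\times X=0$, yields $(\nabla^g_X J)(X)=0$, which is exactly condition $(*)$. For $\nabla^g J\neq 0$ I note that $X\times Y\neq 0$ whenever $X,Y$ are linearly independent (orthonormal tangent vectors have unit-length cross product), so $\nabla^g J$ does not vanish identically and $(S^6,g,J)$ is genuinely nearly K\"ahler rather than K\"ahler.

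The only real content is the geometric input that the round $S^6$ is umbilic with $K$ a nonzero multiple of $\id$; this is the step I expect to require the most care to state cleanly, though it is entirely classical. Once it is in hand, the conclusion is purely algebraic and rests solely on the alternating property of the seven-dimensional cross product.
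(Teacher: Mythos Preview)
Your argument is correct and is exactly what the paper intends: it states the proposition merely as ``a direct consequence of the last lemma'' without further comment, and you have supplied precisely the missing ingredients (umbilicity of the round $S^6$ so that $K$ is a scalar multiple of the identity, followed by the alternating property $X\times X=0$). One small caveat: the equality $(\nabla^g_X J)(Y)=-X\times Y$ only holds after projecting to $T_pS^6$, since $\langle N,X\times Y\rangle=\langle J(X),Y\rangle$ need not vanish; this is harmless for $(\nabla^g_XJ)(X)=0$, and for $\nabla^gJ\neq 0$ you can simply pick $Y\perp J(X)$ so that $X\times Y$ is already tangent and of unit length.
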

\noindent
%
%
\begin{prop}[Calabi \cite{Cal58}]
If $(S,I)$ is a compact, oriented $6$-manifold with an immersion into the Cayley space $\mathcal{Y}$, 
the induced almost complex structure $J$  on $S$ is non-integrable.
\end{prop}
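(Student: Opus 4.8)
The plan is to argue by contradiction: if $J$ were integrable, its Nijenhuis tensor $N_J$ would vanish identically on $S$, and I will exhibit one point where this is impossible. The strategy has three ingredients: rewrite $N_J$ purely in terms of the shape operator $K$ by feeding the preceding lemma into the Levi-Civita expression for $N_J$; use compactness to locate a point where $K$ is definite; and show that definiteness of $K$ forces a component of $N_J$ to be non-zero there.

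First I would record the torsion-free (Levi-Civita) identity for the Nijenhuis tensor, valid on any almost Hermitian manifold,
\[
N_J(X,Y)=(\nabla^g_{JX}J)Y-(\nabla^g_{JY}J)X-J(\nabla^g_XJ)Y+J(\nabla^g_YJ)X .
\]
Since $J$ is integrable if and only if $N_J\equiv0$ (independently of any normalisation chosen for $N_J$), it suffices to show $N_J\not\equiv0$. Substituting the lemma, $(\nabla^g_XJ)Y=K(X)\times Y$, together with $J=N\times(\cdot)$, turns the right-hand side into a purely algebraic expression in $K$, $N$ and the cross product:
\[
N_J(X,Y)=K(JX)\times Y-K(JY)\times X-N\times\bigl(K(X)\times Y\bigr)+N\times\bigl(K(Y)\times X\bigr).
\]

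Next I would invoke compactness. If $\iota$ denotes the immersion, the function $p\mapsto|\iota(p)|^2$ attains a maximum at some $p\in S$; there the position vector $\iota(p)$ is normal, hence a positive multiple of $N$, and the negative semidefiniteness of the Hessian at a maximum translates, via the Gauss formula, into definiteness of $K$ at $p$. Thus at $p$ all principal curvatures have the same sign and $\langle K(X),X\rangle\neq0$ for every $X\neq0$.

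The crux is to show that a definite $K$ cannot kill $N_J$ at $p$. Here I would fix an orthonormal frame $e_1,\dots,e_6$ of $T_pS$ adapted to the octonionic multiplication in the standard way (with $e_7=N$), and simply evaluate one component. Using the scalar triple product identity, and the fact that in such a frame the cross product of two distinct basis vectors is again a single basis vector---a consequence of the Fano-plane incidence pattern of octonion multiplication---all off-diagonal contributions of $K$ cancel, leaving
\[
\langle N_J(e_1,e_2),Je_4\rangle=-\bigl(\langle Ke_1,e_1\rangle+\langle Ke_2,e_2\rangle+\langle KJe_1,Je_1\rangle+\langle KJe_2,Je_2\rangle\bigr),
\]
that is, up to sign the trace of $K$ over the $J$-invariant $4$-plane spanned by $e_1,Je_1,e_2,Je_2$. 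Since $K$ is definite at $p$, this partial trace is non-zero, so $N_J(p)\neq0$ and $J$ is not integrable. I expect the main obstacle to be precisely this last step: carrying out the cross-product bookkeeping correctly and verifying the cancellation of the off-diagonal shape-operator terms, which is where the non-associativity of $\O$ does the real work. As a sanity check one may set $K=\id$ (the round sphere), recovering $\langle N_J(e_1,e_2),Je_4\rangle=-4\neq0$, consistent with the previous proposition that $S^6$ carries a nearly K\"ahler but never K\"ahler structure.
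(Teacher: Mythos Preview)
Your proposal and the paper share the same global architecture: use compactness to locate a point where the shape operator $K$ is definite, and then derive a contradiction with $N_J\equiv 0$ at that point. The difference lies in how the local step is executed. Calabi (and the paper) first simplify the integrability condition to a clean algebraic statement about the shape operator, namely that $N_J\equiv 0$ is equivalent to $K$ being complex anti-linear, $K\circ J=-J\circ K$. The contradiction is then immediate and coordinate-free: if $X$ is an eigenvector of $K$ with eigenvalue $\lambda$, then $JX$ is an eigenvector with eigenvalue $-\lambda$, which is impossible at a point where $K$ is definite. Your route bypasses this characterisation and instead evaluates one scalar component of $N_J$ in an octonionic frame, obtaining (if the bookkeeping is right) a partial trace of $K$ over a $J$-invariant $4$-plane. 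That also yields a contradiction with definiteness, and your sanity check $K=\id$ is reassuring; note too that your claimed formula is consistent with the paper's criterion, since $KJ=-JK$ forces $\langle KJe_i,Je_i\rangle=-\langle Ke_i,e_i\rangle$ and hence kills your partial trace. The trade-off is clear: the paper's argument is shorter, conceptual, and avoids any cross-product tables, while yours is more computational and leaves the cancellation of off-diagonal $K$-terms as an exercise you yourself flag as the delicate point. If you want to streamline, it is worth deriving $KJ+JK=0$ directly from your expression for $N_J$---this is the real content hiding behind your frame computation.
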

\noindent
To prove this theorem Calabi studied the shape operator $K$ of the hypersurface. He found that the 
integrability condition for $J$ is that $K$ is complex anti-linear,
\bdm
K \circ J = - J \circ K \, .
\edm
In any closed hypersurface of the Euclidian space, there 
exists an open subset on which the second fundamental form is positive or negative definite. 
But if the tangent vector $X$ is an eigenvector
of the shape operator with eigenvalue $\lambda$ , then $J(X)$ is again
an eigenvector with eigenvalue $- \lambda$. This yields a contradiction. 
\section{$S^6$ as naturally reductive space}
%
A homogeneous Riemannian space $M=G/H$ is called reductive if 
there exists an $\mathrm{Ad}(H)$-invariant subspace $\mathfrak{m}$ such that
$\mathfrak{g}=\mathfrak{h}\oplus\mathfrak{m}$. Denote by $\langle .. \, , \, .. \rangle$
the inner product in $\mathfrak{m}$ defining the $G$-invariant metric.
If it satisfies
\bdm\tag{$**$}
\langle [X ,  Y]_{\mathfrak{m} }  ,  Z  \rangle \ = \ -  \langle [X  ,  Z]_{\mathfrak{m}} ,  Y  \rangle ,
\edm
the homogeneous space is called naturally reductive. In this case the tensor
\bdm
T(X , Y, Z) \ := \ - \langle [X  ,  Y]_{\mathfrak{m}}  ,  Z  \rangle 
\edm
is totally skew symmetric, i.e. $T$ is a $3$-form. The canonical connection
$\nabla^{c}$ of $G/H$ is the unique metric connection with skew symmetric torsion tensor $T$,
\bdm
\nabla^{c} \ = \ \nabla^g \ + \ \frac{1}{2} T \, .
\edm
The holonomy of $\nabla^{c}$ is contained in the isotropy group $H$, i.e.
the canonical connection is much more adapted to the space $G/H$ then
the Levi-Civita connection $\nabla^g$. A naturally reductive space with vanishing
torsion $T$ is a Riemannian symmetric space. 
Naturally reductive homogeneous spaces have the special property that the torsion 
and the curvature are parallel with 
respect to the canonical connection, 
\bdm
\nabla^c \mathcal{R}^c \ = \ 0 \ , \quad \nabla^c T^c \ = \ 0 \, .
\edm
Observe that a homogenous
Riemannian manifold can be naturally reductive in different ways, it depends
on the choice of the subgroup $G \subset \mathrm{Iso}(M)$ of the isometry group. However, this happens only 
for spheres or Lie groups, see the recent results of C. Olmos and S. Reggiani \cite{OR12}. 
\vspace{3mm}

\noindent
Any nearly K\"ahler manifold admits a unique hermitian connection with skew symmetric torsion, too. 
This connection has been introduced by A. Gray and is called the characteristic connection 
of the nearly K\"ahler manifold, see \cite{G76}. Let us explain the proof as well as the 
formula for the characteristic torsion.
\begin{prop}
Let $(M,g,J)$ be a nearly K\"ahler manifold. There exists a unique
metric connection preserving the almost complex structure and with skew symmetric
torsion, and its torsion $3$-form is given by the formula
\bdm
T^c(X,Y,Z) \ = \ \langle (\nabla^g_X J)(JY) \, , \, Z \rangle \, .
\edm
\end{prop}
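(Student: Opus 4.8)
The plan is to establish existence and uniqueness separately. For existence, I would take the explicit candidate connection $\nabla^c = \nabla^g + \tfrac{1}{2}T^c$ where $T^c$ is the prescribed $3$-form $T^c(X,Y,Z) = \langle(\nabla^g_X J)(JY),Z\rangle$, and verify directly that it is metric, that it preserves $J$, and that its torsion tensor really is the totally skew-symmetric form $T^c$. A metric connection of the form $\nabla^g + \tfrac{1}{2}A$ with $A$ a $3$-form automatically has skew-symmetric torsion equal to that same $3$-form, so the metricity and the torsion-shape are essentially built in, provided I first confirm that $T^c$ is genuinely skew-symmetric in all three arguments. That skew-symmetry is exactly the point where the nearly K\"ahler condition $(\nabla^g_X J)X = 0$ enters: polarizing $(*)$ gives $(\nabla^g_X J)Y = -(\nabla^g_Y J)X$, and combining this with the standard identities $\langle(\nabla^g_X J)Y,Z\rangle = -\langle Y,(\nabla^g_X J)Z\rangle$ (from metricity of $\nabla^g$ and $J^2 = -\id$) and the relation between $(\nabla^g_X J)(JY)$ and $(\nabla^g_X J)Y$ should yield total skew-symmetry.

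The remaining existence step is to check $\nabla^c J = 0$. I would compute $(\nabla^c_X J)Y = (\nabla^g_X J)Y + \tfrac{1}{2}[T^c(X,JY)^\sharp$-type terms$]$, writing out how the $\tfrac{1}{2}T^c$ correction acts on $J$ through the commutator $\tfrac{1}{2}(T^c_X \circ J - J \circ T^c_X)$ where $T^c_X$ denotes the endomorphism $Y \mapsto (T^c(X,Y,\cdot))^\sharp$. The claim reduces to an algebraic identity among the tensor $\nabla^g J$ and itself, again exploiting $J^2 = -\id$ and the skew-symmetry of $\nabla^g J$ in the nearly K\"ahler setting; this is a direct, if slightly fiddly, contraction argument.

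For uniqueness, I would use the general fact that any two metric connections differ by a tensor $A_X \in \so(TM)$ (i.e.\ $\langle A_X Y, Z\rangle$ skew in $Y,Z$), and that requiring skew-symmetric torsion pins down the totally skew-symmetric part while requiring $J$-invariance constrains the rest. Concretely, if $\nabla^1, \nabla^2$ are two connections both metric, both preserving $J$, and both with skew torsion, their difference $A = \nabla^1 - \nabla^2$ satisfies: $\langle A_X Y,Z\rangle$ is skew in the last two slots (metricity), its total skew-symmetrization vanishes because both torsions are $3$-forms computing to the same thing via the first Bianchi-type relation $T(X,Y) = A_X Y - A_Y X$, and $[A_X, J] = 0$ (both preserve $J$). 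The standard linear-algebra lemma is that a connection difference which is simultaneously skew (metric), has vanishing total antisymmetrization, and commutes with $J$ must vanish. The main obstacle is the skew-symmetry verification in the existence part: it is the one place where the defining nearly K\"ahler identity is indispensable and where the interplay of the three identities (metricity of $\nabla^g$, $J^2=-\id$, and polarized $(*)$) has to be orchestrated carefully, since a sign error there collapses the whole argument.
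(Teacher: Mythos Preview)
Your existence outline is sound and in fact more complete than the paper's own argument: the paper never checks that the candidate $T^c$ is a $3$-form or that $\nabla^c J=0$ actually holds; it takes a different route. It \emph{assumes} a metric connection $\nabla=\nabla^g+\tfrac12 T$ with $T\in\Lambda^3$ satisfying $\nabla J=0$, writes out
\[
0=\langle(\nabla^g_XJ)Y,Z\rangle+\tfrac12 T(X,JY,Z)+\tfrac12 T(X,Y,JZ),
\]
symmetrizes in $X,Y$, uses the polarized nearly K\"ahler identity to kill the $\nabla^gJ$ terms, deduces $T(X,JY,Z)=-T(Y,JX,Z)$ and then $T(X,Y,JZ)=T(X,JY,Z)$, and substitutes back to read off the formula for $T$. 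Uniqueness thus falls out of the derivation itself, and existence is left implicit. Your direct verification of existence is a worthwhile complement to that.

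Your uniqueness argument, however, contains a genuine slip. You assert that the difference $A=\nabla^1-\nabla^2$ has vanishing total antisymmetrization. This is backwards: since $\nabla^i=\nabla^g+\tfrac12 T^i$ with each $T^i$ already a $3$-form, one has $A=\tfrac12(T^1-T^2)$, so $A$ \emph{is} a $3$-form and its total antisymmetrization is $A$ itself, not zero. The ``standard linear-algebra lemma'' you invoke (skew in the last two slots, vanishing alternation, $J$-commuting $\Rightarrow$ zero) is essentially the Levi--Civita uniqueness argument, which needs no $J$-hypothesis and does not apply here. The correct statement you need is: a $3$-form $A$ with $e_X\haken A\in\mathfrak{u}(n)$ for every $X$ (equivalently $A(X,JY,Z)+A(X,Y,JZ)=0$) must vanish. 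This is true---splitting into $(p,q)$-types, the condition kills $\Lambda^{3,0}\oplus\Lambda^{0,3}$ immediately, and for a real form it also kills $\Lambda^{2,1}\oplus\Lambda^{1,2}$ since contraction by a $(0,1)$-vector would produce a forbidden $(2,0)$-component---but it is a different lemma from the one you stated. With this correction your abstract uniqueness step goes through; alternatively, the paper's derivation gives uniqueness more cheaply.
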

\begin {proof}
%
Consider a metric connection $\nabla = \nabla^g + \frac{1}{2} T$ with
an arbitrary skew symmetric torsion. The condition $\nabla J = $ yields the equation
\bdm
0 \ = \ \langle (\nabla^g_X J)(Y) \, , \, Z \rangle \, + \, \frac{1}{2} T(X,JY,Z) \,
+ \frac{1}{2} T(X,Y,JZ) \, .
\edm 
Symmetrizing the latter equation with respect to $X$ and $Y$, we obtain
\bdm
0 \ = \ \langle (\nabla^g_X J)(Y) \, , \, Z \rangle \, + \, \langle (\nabla^g_Y J)(X) \, , \, Z \rangle 
\, + \, \frac{1}{2} T(X,JY,Z) \, + \, \frac{1}{2} T(Y,JX,Z) \, . 
\edm
In case the almost complex structure is nearly K\"ahler we obtain the condition
\bdm
T(X,JY,Z) \ = \ - T(Y,JX, Z),
\edm
and moreover
\bdm
T(X,Y,JZ) \ = \ T(Y,JZ,X) \ = \ - T(Z,JY,X) \ = \ T(X,JY,Z) \, .
\edm
Inserting the latter formula into the first one, we finally obtain the
formula for the characteristic torsion
\bdm
0 \ =  \ \langle (\nabla^g_X J)(Y) \, , \, Z \rangle \, + \ T(X,JY,Z) \, .
\edm
\end{proof}

If a nearly K\"ahler manifold is a reductive homogeneous space,  the canonical connection in the sense 
of  reductive spaces coincides with the characteristic connection in the sense of 
nearly K\"ahler manifolds.
With respect to the full isometry group
$G = \mathrm{SO}(7)$, the round sphere $S^6$ becomes a symmetric space,
\bdm
S^6 \ = \ \mathrm{SO}(7)/\mathrm{SO}(6),
\edm 
the canonical connection coincides with the Levi-Civita connection and is hence torsion free.
On the other side,  taking into account the almost complex structure
 $J$ induced by the octonions, $S^6$ becomes a naturally reductive space (see \cite{FI55}, \cite{R93}). Indeed, by construction $J$ and $g$ are invariant under the action of 
the automorphism group of octonions. In particular, the exceptional group $G_2
\subset SO(7)$ preserves the metric as well as the almost complex structure. Moreover, $G_2$ acts 
transitively on $S^6$. The isotropy subgroup preserves the linear complex
structure of the tangent space. Consequently, it is an $8$-dimensional
subgroup of $\mathrm{U}(3)$  isomorphic to $\mathrm{SU}(3)$, see \cite[Prop.\,5.2]{Dr17}.
\begin{prop}
$S^6 = G_2/\mathrm{SU}(3)$ is naturally reductive,  its canonical connection coincides with the
characteristic connection, and its torsion $3$-form is given by the formula
\bdm
T^c(X,Y,Z) \ = \ - \langle J(X \times Y),Z \rangle \ = \ - \langle N, (X\times Y) \times Z \rangle \, . 
\edm
\end{prop}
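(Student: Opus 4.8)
The plan is to establish the three claims in sequence: that $S^6 = G_2/\mathrm{SU}(3)$ is naturally reductive, that its canonical connection equals the characteristic connection, and that the torsion is given by the stated formula. For the first two points I would invoke the general principle stated in the excerpt: since $S^6$ with $(J,g)$ is nearly K\"ahler (Proposition on $(S^6,g,J)$) and simultaneously a reductive homogeneous space $G_2/\mathrm{SU}(3)$ with $G_2$ preserving both $g$ and $J$, the canonical connection in the reductive sense is a metric connection with skew-symmetric torsion whose holonomy lies in $\mathrm{SU}(3)$, so it preserves $J$. By the uniqueness in the preceding Proposition (there is a \emph{unique} metric connection preserving $J$ with skew torsion on a nearly K\"ahler manifold), the canonical and characteristic connections must coincide, and natural reductivity follows since the torsion $T(X,Y,Z) = -\langle[X,Y]_{\mathfrak m},Z\rangle$ is automatically a $3$-form on such a space.

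The real content is the explicit torsion formula, and here the strategy is to combine two ingredients already available. From the preceding Proposition, the characteristic torsion is $T^c(X,Y,Z)=\langle(\nabla^g_X J)(JY),Z\rangle$. From the Lemma on hypersurfaces, applied to $S^6$ sitting in $\mathcal Y$ as a unit sphere, the shape operator is $K=-\id$ (the round sphere has principal curvatures all equal to one up to sign convention), so the Lemma gives $(\nabla^g_X J)(Y)=K(X)\times Y = -X\times Y$. Substituting $JY = N\times Y$ for the second argument, I would compute
\bdm
T^c(X,Y,Z) = \langle (\nabla^g_X J)(N\times Y),Z\rangle = -\langle X\times(N\times Y),Z\rangle.
\edm
The task then reduces to simplifying the triple cross product on the right so that it matches $-\langle J(X\times Y),Z\rangle = -\langle N\times(X\times Y),Z\rangle$.

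The main obstacle is precisely this algebraic identity in the Cayley space: one must show $\langle X\times(N\times Y),Z\rangle = \langle N\times(X\times Y),Z\rangle$ for $X,Y,Z$ tangent to $S^6$ (hence orthogonal to $N$). I would attack it using the third bulleted property of the cross product together with the scalar triple product identity, rewriting $X\times(N\times Y)$ via the rule $(A\times B)\times C = A\times(B\times C)-\langle A,B\rangle C$ after suitable relabeling, exploiting the orthogonality of $X,Y,Z$ to $N$ to kill the inner-product correction terms, and converting everything into the fully antisymmetric scalar triple product $(XYZ)$. The second equality in the statement, $-\langle J(X\times Y),Z\rangle = -\langle N,(X\times Y)\times Z\rangle$, is then immediate from the scalar triple product identity $\langle N\times(X\times Y),Z\rangle = \langle N,(X\times Y)\times Z\rangle$. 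The delicate part throughout will be tracking the sign conventions for $K$ and for the orientation-induced normal $N$, since these determine whether the final torsion carries the correct sign.
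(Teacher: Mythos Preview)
Your route to the torsion formula is essentially dual to the paper's: you start from the general nearly K\"ahler expression $T^c(X,Y,Z)=\langle(\nabla^g_XJ)(JY),Z\rangle$ and the hypersurface Lemma to \emph{derive} the formula, whereas the paper takes the formula as given (citing [Dr17] for the canonical torsion of the reductive space) and \emph{verifies} that the resulting connection satisfies $\nabla J=0$. Both directions hinge on the same Cayley-algebra identity you isolate, namely that $X\times(N\times Y)$ and $N\times(X\times Y)$ agree after pairing with a tangent vector $Z$; the paper writes this as $\langle J(X\times JY),Z\rangle=\langle X\times Y,Z\rangle$ and dispatches it with the third cross-product rule and the Malcev identity. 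Your approach has the virtue of being self-contained (no appeal to [Dr17] for the torsion), at the cost of having to chase the sign of the shape operator, as you note.

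There is, however, a genuine circularity in your first paragraph. You argue that the canonical connection of the reductive space is ``a metric connection with skew-symmetric torsion'' and then invoke uniqueness of the characteristic connection; but skewness of the canonical torsion $-\langle[X,Y]_{\mathfrak m},Z\rangle$ is precisely the naturally reductive condition $(**)$, which you then claim \emph{follows} from the coincidence of the two connections. You cannot have it both ways. The paper avoids this by citing [Dr17] for natural reductivity directly. An easy intrinsic fix on your side: $G_2$ is compact semisimple, so a negative multiple of its Killing form is an $\mathrm{Ad}(G_2)$-invariant inner product on $\mathfrak g_2$ whose restriction to $\mathfrak m$ gives the round metric; $\mathrm{Ad}$-invariance of this form immediately yields $(**)$. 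Once that is in place, your uniqueness argument and the subsequent computation go through.
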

\begin{proof}
An explicit description of the reductive space $S^6 = G_2/\mathrm{SU}(3)$ is given in 
\cite[Remarks\,5.3, 6.4]{Dr17}. In particular, it is proved that the metric satisfies the condition
$(**)$ for being naturally reductive, and that the torsion of the canonical connection is given by the 
formula stated in the proposition. We now prove that the same formula describes the
characteristic connection. Consider the metric connection $\nabla$ defined by the formula
\bdm
\langle \nabla_XY , Z \rangle \ := \ \langle \nabla^g_XY , Z \rangle  \, - \, \frac{1}{2} \langle J(X\times Y) , Z \rangle \ . 
\edm
We compute the covariant derivative $\nabla J$ :
\bdm
\langle (\nabla_XJ)Y,Z \rangle \ = \ \langle (\nabla^g_XJ)Y,Z \rangle \,
- \frac{1}{2} \langle J(X\times Y) , Z\rangle \, + \, \frac{1}{2} 
\langle J\circ J(X \times Y) , Z \rangle \ .
\edm
Next we apply the formula for the covariant derivative $\nabla^gJ$. Then we obtain
\bdm
2 \, \langle (\nabla_XJ)Y,Z \rangle \ = \ \langle X \times Y , Z \rangle \, - \
\langle J(X \times JY) , Z \rangle \ .
\edm
Since $N$ and $X,Y$ are orthogonal, the sume of the right side vanishes. Indeed, we have
\bdm
\langle J(X\times JY), Z \rangle) \ = \ \langle N\times (X 
\times (N\times Y)), Z\rangle \ = \ - \langle N \times (N\times (X\times Y)), Z \rangle \ = \ \langle (X\times Y , Z \rangle \ .
\edm
The computation proves that the connection $\nabla$ is a metric connection preserving the almost complex structure $J$. Moreover, its torsion
\bdm
T^c(X,Y,Z) \ = \ - \langle J(X\times Y),Z) \ = \ - \langle N, (X\times Y)\times Z \rangle 
\edm
is skew symmetric. Consequently, $\nabla$ is the canonical (characteristic) connection of the
naturally reductive and nearly K\"ahler space $G_2/\mathrm{SU}(3)$.
\end{proof}
\noindent
By a theorem of Butruille \cite{B05}, no other nearly K\"ahler structure on $S^6$ can be homogeneous. 
In fact he proved that the only homogeneous nearly  K\"ahler $6$-manifolds are $S^6$, $S^3\times S^3$, 
$\CP^3$ and the flag manifold $F_2$ with their standard metrics.
\section{$G$-structures and connections with skew symmetric torsion}\label{sec2}
%
The connection defined above can be described from the point of view of $G$-structures. Let $G$ be a closed Lie subgroup of $SO(n)$ and let 
$\mathfrak{so}(n)=\mathfrak{g}\oplus\mathfrak{m}$ be the corresponding orthogonal decomposition 
of the Lie algebra $\mathfrak{so}(n)$. Then a G-structure on  Riemannian manifold $M$ is a 
reduction $\mathcal{R}$ of its frame bundle, which is  principal $SO(n)$-bundle, to the 
subgroup $G$. As the Levi-Civita connection is a $1$-form with values in $\mathfrak{so}(n)$, 
using the decomposition $\mathfrak{so}(n)=\mathfrak{g}\oplus\mathfrak{m}$,  we get a direct 
sum decomposition of its restriction to $\mathcal{R}$  into a connection in principal 
$G$-bundle $\mathcal{R}$ and a term $\Gamma$ corresponding to $\mathfrak{m}$. $\Gamma$ is a 
$1$-form on $M$ with values in the associated bundle $\mathcal{R}\times_G\mathfrak{m}$ and is 
called the intrinsic torsion. It measures the integrability of $G$-structure; the structure 
is integrable if and only if $\Gamma=0$. At a fixed point, $\Gamma$ is an element of the $G$-representation $\R^n \otimes \mathfrak{m}$.
Moreover,  one can show  that in any case when $G$ is the isotropy group of some tensor $\mathcal{T}$ the algebraic 
$G$-types of $\Gamma$ correspond to algebraic $G$-types of $\nabla^g \mathcal{T}$ (see \cite{F03};
we also recommend \cite{Agr06} as a suitable review on characteristic connections).
We are looking again for metric connections with skew torsion and preserving the fixed
$G$-structure. If it exists, it is called the characteristic 
torsion of the fixed $G$-type and we denote by $T^c$. However, not all $G$-structures admit such a connection. The question whether or not a certain $G$-type admits a characteristic connection can be decided using representation theory. Indeed, consider 
the $G$-morphism
\bdm
\Theta : \Lambda^3(\R^n) \ \longrightarrow \ \R^n \otimes \mathfrak{m} \ , 
\quad \Theta(T) \ := \ \sum_{i=1}^n e_i \otimes  \mathrm{pr}_{\mathfrak{m}}(e_i \haken T) \ .
\edm
\begin{thm}[\cite{F03}]
A $G$-structure of a Riemannian manifold admits a characteristic connection if and only if the 
intrinsic torsion $\Gamma$ belongs to the image of $\Theta$. In this case
the characteristic torsion $T$ and the intrinsic torsion are related by the formula 
$2 \cdot \Gamma=- \Theta(T)$.
\end{thm}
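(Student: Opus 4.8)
The plan is to parametrise all candidate connections by their torsion and then turn the condition ``preserves the $G$-structure'' into a single linear equation relating $\Gamma$ and $\Theta$. First I recall the standard fact (already used in the proof of the previous proposition, where $\nabla=\nabla^g+\frac12 T$ appeared) that a metric connection $\nabla$ has totally skew-symmetric torsion if and only if it has the form $\nabla=\nabla^g+\frac12 T$ for a $3$-form $T$; the difference tensor $A:=\nabla-\nabla^g$ then satisfies $g(A_XY,Z)=\frac12 T(X,Y,Z)$. Since this is skew in $Y,Z$, each $A_X$ is skew-symmetric, i.e. $A_X\in\mathfrak{so}(n)$, and under the identification $\mathfrak{so}(n)\cong\Lambda^2(\R^n)$ one has $A_X=\frac12(X\haken T)$. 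Hence, as an element of $\R^n\tp\mathfrak{so}(n)$, the difference tensor is exactly $A=\frac12\sum_i e_i\tp(e_i\haken T)$.

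Next I pass to the reduction $\mathcal{R}$. The difference of the connection forms of $\nabla$ and $\nabla^g$ is precisely the $\mathfrak{so}(n)$-valued $1$-form $A$, so on $\mathcal{R}$ the connection form of $\nabla$ reads $\omega^g|_{\mathcal{R}}+A$. Splitting $\omega^g|_{\mathcal{R}}$ along $\mathfrak{so}(n)=\mathfrak{g}\oplus\mathfrak{m}$ gives its $\mathfrak{g}$-valued connection part plus the $\mathfrak{m}$-component $\Gamma$, which is by definition the intrinsic torsion. Now $\nabla$ preserves the $G$-structure exactly when its connection form takes values in $\mathfrak{g}$ on $\mathcal{R}$, i.e. when the $\mathfrak{m}$-part of $\omega^g|_{\mathcal{R}}+A$ vanishes. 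Projecting $A$ onto $\mathfrak{m}$ term by term yields $\mathrm{pr}_{\mathfrak{m}}(A)=\frac12\sum_i e_i\tp\mathrm{pr}_{\mathfrak{m}}(e_i\haken T)=\frac12\Theta(T)$, so the invariance condition collapses to the single equation
\bdm
\Gamma \, + \, \tfrac12\,\Theta(T) \ = \ 0, \qquad\text{equivalently}\qquad 2\,\Gamma \ = \ -\,\Theta(T) \, .
\edm

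Both assertions of the theorem now follow immediately. If a characteristic connection exists, its torsion $T$ satisfies the displayed identity, so $\Gamma=-\frac12\Theta(T)$ lies in the image of the linear map $\Theta$. Conversely, if $\Gamma\in\im\Theta$, choose any $3$-form $T$ with $\Theta(T)=-2\Gamma$ and set $\nabla:=\nabla^g+\frac12 T$; by the first paragraph this is a metric connection with skew torsion $T$, and the computation of the second paragraph shows its connection form is $\mathfrak{g}$-valued on $\mathcal{R}$, so it preserves the $G$-structure. This is the desired characteristic connection, and $2\,\Gamma=-\Theta(T)$ holds by construction.

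I expect the only genuinely delicate point to be the bookkeeping in the first paragraph: establishing that total skew-symmetry of the torsion forces the difference tensor into the precise form $A_X=\frac12(X\haken T)$, and fixing the factor $\tfrac12$ together with the identification $\Lambda^2(\R^n)\cong\mathfrak{so}(n)$ so that $\mathrm{pr}_{\mathfrak{m}}(A)$ matches the normalisation built into the definition of $\Theta$. Once this normalisation is pinned down, the equivalence between $G$-invariance of $\nabla$ and the vanishing of the $\mathfrak{m}$-component of its connection form on $\mathcal{R}$ is a formal consequence of what it means for a connection to be compatible with the reduction, and everything else is linear algebra.
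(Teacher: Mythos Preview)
The paper does not actually prove this theorem; it is quoted from \cite{F03} with a citation in the theorem header and no proof following the statement, so there is nothing in this paper to compare your argument against. Your proof is correct and is essentially the standard argument one expects in the original reference: parametrise metric connections with skew torsion by $3$-forms via $\nabla=\nabla^g+\tfrac12 T$, observe that compatibility with the reduction $\mathcal{R}$ is exactly the vanishing of the $\mathfrak{m}$-component of the connection form, and then read off that this $\mathfrak{m}$-component equals $\Gamma+\tfrac12\Theta(T)$.
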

\section{Gray-Hervella classification}
%
Let $(M,g,J)$ be  a $6$-dimensional almost Hermitian manifold. Then the corresponding 
$U(3)$-structure 
is given by the Lie algebra decomposition $\mathfrak{so}(6)=\mathfrak{u}(3)\oplus\mathfrak{m}^6$. 
One can directly compute the decomposition of $\R^6\otimes\mathfrak{m}^6$.
The $\mathrm{U}(3)$-representation $\R^6 \otimes \mathfrak{m}^6$ splits
into four irreducible representations,
\bdm
\R^6 \otimes \mathfrak{m}^6 \ = \ \mathcal{W}_1 \oplus \mathcal{W}_2 \oplus 
\mathcal{W}_3 \oplus \mathcal{W}_4 \ .
\edm
These are the basic classes of $U(3)$-structures in the Gray-Hervella
classification. The manifolds of type 
$\mathcal{W}_1$ are exactly the nearly  K\"ahler manifolds.
On the other side,
the $\mathrm{U}(3)$-representation $\Lambda^3(\R^6)$ splits into three irreducible
components,
\bdm
\Lambda^3(\R^6) \ = \ \mathcal{W}_1 \oplus 
\mathcal{W}_3 \oplus \mathcal{W}_4 \ .
\edm
The reader can find an explicit description of these decompositions in the paper \cite{AFS05}. 
Together, this allows us to describe more explicitly 
the $\mathrm{U}(3)$-structures admitting a characteristic connection:
\begin{cor}[\cite{FI02}]
A $\mathrm{U}(3)$-structure admits a characteristic connection if and only if the 
$\mathcal{W}_2$-component of the intrinsic torsion vanishes.
\end{cor}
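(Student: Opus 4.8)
The plan is to read the statement off from the criterion of Friedrich and Ivanov (\cite{F03}) quoted above: a characteristic connection exists precisely when the intrinsic torsion $\Gamma$ lies in the image of the $\mathrm{U}(3)$-morphism $\Theta\colon\Lambda^3(\R^6)\to\R^6\tp\mathfrak{m}^6$. Thus the whole question reduces to identifying $\im\Theta$ as a $\mathrm{U}(3)$-submodule of $\R^6\tp\mathfrak{m}^6$, after which membership of $\Gamma$ translates directly into a condition on its Gray--Hervella components.

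First I would invoke Schur's lemma. Since $\Theta$ is $\mathrm{U}(3)$-equivariant and the source splits as $\Lambda^3(\R^6)=\mathcal{W}_1\oplus\mathcal{W}_3\oplus\mathcal{W}_4$ while the target splits as $\R^6\tp\mathfrak{m}^6=\mathcal{W}_1\oplus\mathcal{W}_2\oplus\mathcal{W}_3\oplus\mathcal{W}_4$, with all four modules pairwise non-isomorphic and of multiplicity one, the map must be block-diagonal: it sends each summand $\mathcal{W}_i$ of the source ($i=1,3,4$) either to zero or isomorphically onto the like-named summand of the target, and it can never hit $\mathcal{W}_2$. Hence $\im\Theta\subseteq\mathcal{W}_1\oplus\mathcal{W}_3\oplus\mathcal{W}_4$ with no $\mathcal{W}_2$-contribution, independently of any computation.

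The crux is then to show that $\Theta$ is in fact injective, for the source has exactly the dimension of $\mathcal{W}_1\oplus\mathcal{W}_3\oplus\mathcal{W}_4$, so injectivity forces the image to be all of $\mathcal{W}_1\oplus\mathcal{W}_3\oplus\mathcal{W}_4$. Here I would argue directly rather than block by block. Recall the contraction identity $\sum_i e^i\wedge(e_i\haken T)=3\,T$, valid for every $3$-form $T$, so that $T$ is recovered from its full collection of contractions. If $\Theta(T)=0$, then $\mathrm{pr}_{\mathfrak{m}}(e_i\haken T)=0$ for all $i$, i.e.\ every $2$-form $e_i\haken T$ is $J$-invariant and lies in $\mathfrak{u}(3)$. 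Passing to the type decomposition of the complexification, $T=T^{3,0}+T^{2,1}+T^{1,2}+T^{0,3}$, contracting with a $(1,0)$-vector produces a $(2,0)$-piece from $T^{3,0}$ and a $(0,2)$-piece from $T^{1,2}$, both $J$-anti-invariant; their vanishing for all such vectors forces $T^{3,0}=T^{1,2}=0$, and contracting with $(0,1)$-vectors kills $T^{0,3}$ and $T^{2,1}$ in the same way. Thus $T=0$ and $\Theta$ is injective.

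Combining the two steps gives $\im\Theta=\mathcal{W}_1\oplus\mathcal{W}_3\oplus\mathcal{W}_4$, so $\Gamma\in\im\Theta$ if and only if its $\mathcal{W}_2$-component vanishes; by the Friedrich--Ivanov criterion this is exactly the condition for a characteristic connection to exist. I expect the only genuinely delicate point to be the injectivity of $\Theta$: Schur's lemma alone leaves open the possibility that $\Theta$ annihilates one of the three modules, and ruling this out is precisely what the type computation above accomplishes (alternatively, one may evaluate $\Theta$ on the explicit representatives $\mathrm{Re}\,\Omega$, a primitive $(2,1)+(1,2)$ form, and $\alpha\wedge\omega$ recorded in \cite{AFS05}).
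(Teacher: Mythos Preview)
Your argument is correct and follows exactly the route the paper intends: the corollary is presented there without proof, simply as the combination of the criterion $\Gamma\in\im\Theta$ with the two $\mathrm{U}(3)$-decompositions of $\Lambda^3(\R^6)$ and $\R^6\otimes\mathfrak{m}^6$ (the paper refers to \cite{AFS05} and \cite{FI02} for details). The only point you add beyond the paper's outline is the explicit verification that $\Theta$ is injective, which the paper leaves implicit; your type-decomposition argument for this is sound.
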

Let us finally summarize some results of Gray and Kirichenko on nearly  K\"ahler 
manifolds in dimension $6$. 
A nearly K\"ahler manifold is said to be of constant type if there exists  a positive constant 
$\alpha$ such that for all vector fields
\bdm
\|(\nabla^g_XJ)(Y)\|^2=\alpha[\|X\|^2\|Y\|^2-g(X,Y)^2-g(JX,Y)^2].
\edm
\begin{thm}[\cite{G76}]
Let $(M,g,J)$ be a 6--dimensional nearly  K\"ahler manifold that is not  K\"ahler. Then
\begin{itemize}
\item[ii)] $M$ is of constant type,
\item[iii)] $g$ is an Einstein  metric on $M$,
\item[iv)] the first Chern class of $M$ vanishes.
\end{itemize}
\end{thm}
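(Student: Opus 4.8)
The plan is to read off all three statements from the pointwise algebra of the tensor $G(X,Y):=(\nabla^g_X J)Y$ together with the second-order identities that come from differentiating $(\nabla^g_X J)X=0$. First I would record the symmetries of $G$: the nearly K\"ahler condition makes $G$ skew in its two arguments, while $\nabla^g(J^2)=0$ gives $G(X,JY)=G(JX,Y)=-J\,G(X,Y)$ and hence $G(JX,JY)=-G(X,Y)$. Since each $\nabla^g_X J$ is skew-symmetric for $g$, the trilinear map $\psi(X,Y,Z):=g(G(X,Y),Z)$ is a $3$-form, and the relation $\psi(JX,JY,Z)=-\psi(X,Y,Z)$ shows that $\psi$ is of type $(3,0)+(0,3)$ with respect to $J$. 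In complex dimension three the space $\Lambda^{3,0}$ is one-dimensional, so a real form of this type is, at each point, determined up to the $\mathrm{SU}(3)$-action by a single scalar; evaluating $\|G(X,Y)\|^2$ on an adapted unitary frame then forces the pointwise identity $\|G(X,Y)\|^2=\alpha\,[\,\|X\|^2\|Y\|^2-g(X,Y)^2-g(JX,Y)^2\,]$ with $\alpha=\alpha(p)\geq 0$ a smooth function. This is constant type up to the constancy of $\alpha$.

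For (iii) I would first polarize and trace the displayed formula, which in dimension six gives $\sum_i g(G(e_i,X),G(e_i,Y))=4\,\alpha\,g(X,Y)$, so this natural symmetric tensor is already a multiple of $g$. The Ricci tensor is then reached through the nearly K\"ahler curvature identity obtained by applying the Ricci identity to the second covariant derivative of $J$: it expresses $\Ric$ as the sum of the $J$-invariant $*$-Ricci tensor and exactly the above trace term. Using once more that the rigidity special to dimension six pins down the $*$-Ricci part as a multiple of $g$, the two contributions combine to $\Ric=5\alpha\,g$, a pointwise (positive) multiple of $g$. Schur's lemma (valid since $\dim M=6\geq 3$) now forces this factor to be constant; as the factor is a positive multiple of $\alpha$, this simultaneously shows $\alpha$ is constant, and strictness $\nabla^g J\neq 0$ gives $\alpha>0$. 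This completes (ii) and proves that $g$ is Einstein with positive scalar curvature, establishing (iii).

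For (iv) I would exhibit the reduction of the characteristic connection $\nabla^c$ to $\mathrm{SU}(3)$. The $3$-form $\psi$ and its companion $\hat\psi:=\psi(J\cdot,\cdot,\cdot)$ assemble into a complex volume form $\Psi:=\psi+i\hat\psi$ of type $(3,0)$; constant type with $\alpha>0$ guarantees that $\Psi$ vanishes nowhere, and a short computation with the torsion formula $T^c(X,Y,Z)=g((\nabla^g_X J)(JY),Z)$ shows $\nabla^c\Psi=0$. Hence $\nabla^c$ preserves $g$, $J$ and $\Psi$, i.e. it is an $\mathrm{SU}(3)$-connection, so its curvature is $\mathfrak{su}(3)$-valued and in particular trace-free. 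Since $c_1(M)=c_1(T^{1,0}M)$ may be computed by Chern--Weil from any connection on $T^{1,0}M$, the vanishing trace of the curvature of $\nabla^c$ shows that the representative of $c_1$ is identically zero, whence $c_1(M)=0$.

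I expect the genuine technical core to be the nearly K\"ahler curvature identity used for (iii), namely writing $\Ric$ in terms of the $*$-Ricci tensor and the torsion trace. It rests on commuting two covariant derivatives of $J$ and then exploiting $G(JX,Y)=-J\,G(X,Y)$ to collapse the resulting curvature terms, and it is here, rather than in any representation-theoretic step, that the real computation lies. By contrast the pointwise constant-type identity in (ii) is purely first-order and algebraic, and the global constancy of $\alpha$ then comes for free from Schur's lemma once $\Ric$ is known to be a pointwise multiple of $g$. The rigidity special to complex dimension three is what makes every output degenerate to a multiple of $g$ or to the single scalar $\alpha$.
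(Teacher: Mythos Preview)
The paper does not prove this theorem. It is stated as part of a brief summary (``Let us finally summarize some results of Gray and Kirichenko\ldots'') and carries only the citation to \cite{G76}; no argument is given in the text. There is therefore no proof in the paper against which to compare your proposal.

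Your outline is nonetheless a recognisable sketch of Gray's original argument. The reduction of $\psi$ to the two--real--dimensional space of real $(3,0)+(0,3)$ forms is precisely the mechanism that singles out complex dimension three, and deducing global constancy of $\alpha$ from Schur's lemma once $\Ric$ is a pointwise multiple of $g$ is the standard shortcut. Two remarks are in order. First, for (iv) your claim $\nabla^c\Psi=0$ is not really ``a short computation'' from the torsion formula alone: since $T^c=\hat\psi$ and $\psi=-\hat\psi(J\cdot,\cdot,\cdot)$, parallelism of $\Psi$ is equivalent to $\nabla^cT^c=0$ together with $\nabla^cJ=0$, i.e.\ you are implicitly invoking Kirichenko's theorem, which the paper records separately as the next result. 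Second, the genuinely thin step in your sketch is the assertion that ``the rigidity special to dimension six pins down the $*$-Ricci part as a multiple of $g$''. In Gray's proof this is not a consequence of the first-order algebra of $\psi$; it requires a further curvature identity obtained by combining the Ricci identity for $\nabla^2J$ with the second-order relation $2g((\nabla^2_{X,W}J)Y,Z)=-\sigma_{X,Y,Z}\,g(G(W,X),JG(Y,Z))$ and then tracing. You correctly flag this as the technical core, but as written your argument does not indicate how that identity is produced.
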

\begin{thm}[\cite{Kir77}]
The characteristic torsion of a nearly K\"ahler $6$-manifold is parallel
with respect to the characteristic connection, $\nabla^c T^c = 0$.
\end{thm}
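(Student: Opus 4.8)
The plan is to reduce the statement to the vanishing of $\nabla^c(\nabla^g J)$ and then to feed in the rigid algebraic structure that the nearly K\"ahler condition imposes in dimension six. First I would use the previous proposition, $T^c(X,Y,Z)=\langle(\nabla^g_XJ)(JY),Z\rangle$, to record that $T^c$ is built \emph{algebraically} from the single tensor $\psi:=\nabla^gJ$ by inserting $J$ into one slot and lowering an index with $g$. Since the characteristic connection satisfies $\nabla^cg=0$ and $\nabla^cJ=0$, differentiating this expression term by term and cancelling the correction terms against the definition of $\nabla^cT^c$ yields
\bdm
(\nabla^c_WT^c)(X,Y,Z)\ =\ \langle(\nabla^c_W\psi)(X,JY),Z\rangle .
\edm
As $J$ is invertible, letting $Y$ range over all tangent vectors shows that $\nabla^cT^c=0$ is \emph{equivalent} to $\nabla^c\psi=0$, i.e.\ to $\nabla^c(\nabla^gJ)=0$. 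This first step is purely formal and is where the special form of $T^c$ is exploited.

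Next I would pass from $\nabla^c$ back to the Levi--Civita connection via $\nabla^c=\nabla^g+\frac{1}{2}T^c$. Writing $A_W$ for the difference endomorphism, $\langle A_WV,U\rangle=\frac{1}{2}T^c(W,V,U)$, the identity to be proved becomes
\bdm
(\nabla^g_W\psi)(X,Y)\ =\ \psi(A_WX,Y)+\psi(X,A_WY)-A_W\bigl(\psi(X,Y)\bigr).
\edm
The right-hand side is a pointwise expression that is \emph{quadratic} in the first-order data $\nabla^gJ$, while the left-hand side is the \emph{second} covariant derivative $\nabla^g\nabla^gJ$. The essential input that makes the two match is Gray's second-order identity for six-dimensional nearly K\"ahler manifolds, which is a consequence of the constant-type property recorded in the theorem of Gray above: it expresses $(\nabla^g_W\nabla^g_XJ)Y$ pointwise through $g$, $J$ and the constant $\alpha$. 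Substituting this identity converts the whole equation into a tensorial statement at each point.

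The remaining work is to verify that the two algebraic expressions coincide, and this is the main obstacle. Here I would polarize the constant-type equation $\|(\nabla^g_XJ)Y\|^2=\alpha(\|X\|^2\|Y\|^2-g(X,Y)^2-g(JX,Y)^2)$ to obtain the full $(0,4)$-tensor $\langle\psi(X,Y),\psi(Z,V)\rangle$, and combine it with the total skew-symmetry of $\psi$ (Tachibana). Matching the $\alpha$-term produced by Gray's identity against the $T^c$-contractions in the display above is a representation-theoretic cancellation living in $\R^6\otimes\mathfrak{m}^6$; it is cleanest to decompose both sides into their $\mathrm{SU}(3)$-irreducible components and compare them slot by slot. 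Once the components agree, $\nabla^c(\nabla^gJ)=0$ follows, and with it $\nabla^cT^c=0$.

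A more conceptual alternative that I would keep in reserve is spinorial. A six-dimensional nearly K\"ahler manifold carries a $\nabla^c$-parallel spinor $\phi$ on which $T^c$ acts by Clifford multiplication as a constant multiple of $\phi$. Differentiating $T^c\cdot\phi=\const\cdot\phi$ with $\nabla^c$ and using $\nabla^c\phi=0$ gives $(\nabla^c_WT^c)\cdot\phi=0$ for every $W$; since the Clifford action of the relevant space of three-forms on $\phi$ is injective on the appropriate component, this again forces $\nabla^cT^c=0$. This route avoids Gray's identity entirely, at the cost of analysing the Clifford kernel, which is where the difficulty then relocates.
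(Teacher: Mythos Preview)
The paper does not supply a proof of this theorem; it is merely stated with a reference to Kirichenko. There is therefore no in-paper argument to compare against, and I can only assess your proposal on its own merits.

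Your principal route is sound and is essentially the classical one. The reduction $(\nabla^c_W T^c)(X,Y,Z)=\langle(\nabla^c_W\psi)(X,JY),Z\rangle$ is correct because $\nabla^c$ kills both $g$ and $J$, and the passage from $\nabla^c\psi$ to $\nabla^g\psi$ plus $T^c$-correction terms is routine. The heart of the matter is then exactly Gray's second-order identity, which in dimension~$6$ reads (up to normalisation)
\bdm
2\,(\nabla^g_W\nabla^g_XJ)Y \ = \ \alpha\bigl[g(X,Y)W-g(W,Y)X-g(JX,Y)JW+g(JW,Y)JX+2g(JW,X)JY\bigr],
\edm
and is a direct consequence of the constant-type property recorded in Gray's theorem just above. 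Once this is inserted, the verification you flag as ``the main obstacle'' is a straightforward, if tedious, polynomial identity in $g$, $J$ and $\alpha$; the $\mathrm{SU}(3)$-decomposition is a convenient bookkeeping device but not strictly necessary. So this line genuinely closes.

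Your spinorial alternative, however, has a real gap. From $\nabla^c\phi=0$ and $T^c\cdot\phi=c\,\phi$ you would get $(\nabla^c_WT^c)\cdot\phi=(\partial_Wc)\,\phi$, and two things are missing. First, that $c$ is constant already requires $\|T^c\|$ to be constant, i.e.\ the constant-type property --- so you have not ``avoided Gray's identity entirely''. Second, and more seriously, Clifford multiplication $\Lambda^3(\R^6)\to\Delta$ cannot be injective for dimension reasons ($20$ versus $8$), so $(\nabla^c_WT^c)\cdot\phi=0$ does not by itself force $\nabla^c_WT^c=0$. You acknowledge that the difficulty ``relocates'' to the Clifford kernel, but constraining the $\mathrm{U}(3)$-type of $\nabla^c_WT^c$ sufficiently to land in an injective subspace is essentially the same computation as in your first approach. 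As written, the spinorial route is therefore not an independent proof but a reformulation that still leans on the same algebraic input.
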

\section{Spinorial approach}
%
There is another characterization of $6$-dimensional nearly K\"ahler manifolds due to 
R. Grunewald, involving the existence of so called 
real Killing spinors (\cite{Gru90}, see also \cite{BFGK91}). 
Let us first introduce basic facts and definitions.
The real Clifford algebra in dimensions $6$ is isomorphic to 
End$(\R^8)$. The spin 
representation is real, $8$-dimensional and we denote it by 
$\Delta := \mathbb{R}^8$. By fixing an orthonormal basis $e_1, \ldots ,e_6$ of the 
Euclidean  space $\mathbb{R}^6$, one choice for the real representation of 
the Clifford algebra on $\Delta$ is \label{SU3basis}
\begin{align*}
 e_{1} &=  +E_{18} + E_{27} - E_{36} - E_{45}, \qquad
e_{2} =-E_{17} + E_{28} + E_{35} - E_{46},\\
e_{3} &= -E_{16} + E_{25} - E_{38} + E_{47}, \qquad
e_{4} = -E_{15} - E_{26} - E_{37} - E_{48},\\
e_{5} &= -E_{13} - E_{24} + E_{57} + E_{68}, \qquad
e_{6} =+E_{14} - E_{23} - E_{58} + E_{67},
\end{align*}
where the matrices $E_{ij}$ denote the standard basis elements of the 
Lie algebra 
$\mathfrak{so}(8)$, i.\,e. the endomorphisms mapping $e_i$ to $e_j$, $e_j$ to $-e_i$ and 
everything else to zero. The 
spin representation 
 admits a $Spin(6)$-invariant complex structure 
$J : \Delta \rightarrow \Delta$ defined be the formula
\bdm
J \ := \ e_1 \cdot e_2 \cdot e_3 \cdot e_4 \cdot e_5 \cdot e_6  .
\edm 
Indeed, $J^2 = -1$ and $J$ anti-commutes with the Clifford 
multiplication $X \cdot \phi$ by vectors $X \in\R^6$ and spinors
$\phi \in \Delta$;
this reflects the fact that $\mathrm{Spin}(6)$ is isomorphic to $\mathrm{SU}(4)$.
The complexification of $\Delta$ splits,
\bdm
\Delta \otimes_{\R} \C \ = \ \Delta^+ \, \oplus \Delta^- ,
\edm 
which is a consequence of the fact that $J$ is a real structure making $(\Delta,J)$ 
 complex-(anti)-linearly isomorphic to either $\Delta^\pm$, via 
$\phi \ \rightarrow \phi \, \pm \ i \cdot J(\phi)$. Furthermore,
any real spinor $0 \not= \phi \in \Delta$  decomposes  $\Delta$ into three pieces,
\bdm
\Delta \ = \ \R\phi  \oplus  \R\, J(\phi) \oplus  
\{ X \cdot \phi \, : \, X \in \R^6 \} .
\edm
In particular, $J$ preserves the subspaces $\{ X \cdot \phi \, : \, X \in \R^6 \}
\subset \Delta$, and the formula
\bdm
J_{\phi}(X) \cdot \phi \ := \ J(X \cdot \phi)
\edm
defines an orthogonal complex structure $J_{\phi}$ on $\R^6$ 
that depends on the spinor $\phi$. 
Moreover, the spinor determines a $3$-form by means of
\bdm
\omega_{\phi}(X,Y,Z) \ := \ -(X \cdot Y \cdot Z \cdot \phi \, , \, \phi)
\edm
where the brackets indicate the inner product on $\Delta$.
The pair $(J_{\phi} , \omega_{\phi})$ is an $\mathrm{SU}(3)$-structure on $\R^6$, and 
any such arises in this fashion from some real spinor. 
All this can be summarized in the known fact
that $\mathrm{SU}(3)$-structures on $\R^6$ correspond one-to-one with
real spinors of length one ($\bmod \,\Z_2$),
\bdm
\mathrm{SO}(6)/\mathrm{SU}(3)  
 \ = \ \mathbb{P}(\Delta) \ = \ \mathbb{RP}^7 \, .
\edm
These formulas proves the following 
\begin{prop}
Let $M$ be a simply connected, $6$-dimensional  Riemannian spin manifold. Then the
$\mathrm{SU}(3)$-structures on M correspond to the real spinor fields of length one defined on M.
\end{prop}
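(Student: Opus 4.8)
The plan is to upgrade the pointwise algebraic correspondence recorded just above---encoded in the identification $\mathrm{SO}(6)/\mathrm{SU}(3) = \mathbb{P}(\Delta) = \mathbb{RP}^7$---to a statement about global sections over $M$, the only genuinely new ingredient being a sign that must be chosen coherently, and for which simple-connectivity is exactly the right hypothesis. Throughout, let $\Sigma M = \mathcal{R}\times_{\mathrm{Spin}(6)}\Delta$ denote the real spinor bundle associated to a fixed spin structure $\mathcal{R}$, with unit sphere bundle $S(\Sigma M)$ and projectivization $\mathbb{P}(\Sigma M)$; the fibres are $S^7$ and $\mathbb{RP}^7$, respectively, and the fibrewise double cover $S(\Sigma M)\to\mathbb{P}(\Sigma M)$ is the bundle version of the covering $S^7\to\mathbb{RP}^7$ that underlies the displayed identification.

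The direction from spinors to structures is immediate. Given a unit spinor field $\phi$, i.e. a section of $S(\Sigma M)$, the formulas $J_\phi(X)\cdot\phi := J(X\cdot\phi)$ and $\omega_\phi(X,Y,Z):=-(X\cdot Y\cdot Z\cdot\phi,\phi)$ produce at each point the $\mathrm{SU}(3)$-structure attached to $\phi$ by the algebraic discussion above. Since these expressions depend algebraically (hence smoothly) on $\phi$ and on the inner product, the resulting pair $(J_\phi,\omega_\phi)$ is a smooth $\mathrm{SU}(3)$-structure, that is, a genuine reduction of the frame bundle to $\mathrm{SU}(3)$. A direct check shows $J_{-\phi}=J_\phi$ and $\omega_{-\phi}=\omega_\phi$, so $\phi$ and $-\phi$ determine the same structure and the assignment factors through $S(\Sigma M)/{\pm}=\mathbb{P}(\Sigma M)$.

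For the converse I would argue as follows. By the identification above, an $\mathrm{SU}(3)$-structure is the same data as a section $s\colon M\to\mathbb{P}(\Sigma M)$ of the projectivized spinor bundle. To manufacture an honest unit spinor field one must lift $s$ through the double cover $S(\Sigma M)\to\mathbb{P}(\Sigma M)$. The obstruction to such a lift is the isomorphism class of the pulled-back double cover $s^{*}S(\Sigma M)\to M$, an element of $H^1(M;\Z_2)$. This is precisely where the hypothesis enters: $\pi_1(M)=0$ forces $H^1(M;\Z_2)\cong\Hom(\pi_1(M),\Z_2)=0$, so the pulled-back cover is trivial and a global lift $\phi\colon M\to S(\Sigma M)$ exists. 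Feeding $\phi$ into the construction of the previous paragraph returns the original structure, so the two maps are mutually inverse up to the residual global sign; indeed exactly two lifts $\pm\phi$ exist, and both induce the given $\mathrm{SU}(3)$-structure.

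The only real obstacle is this last lifting step, and it is purely topological: everything algebraic was settled beforehand, and the force of ``simply connected'' is that it annihilates the $\Z_2$-class obstructing a coherent global choice of sign for the spinor. Equivalently, one may phrase the lift at the level of principal bundles: the preimage of $\mathrm{SU}(3)\subset\mathrm{SO}(6)$ in $\mathrm{Spin}(6)$ is $\mathrm{SU}(3)\times\Z_2$ (since $-1\notin\mathrm{SU}(3)$ and $\mathrm{SU}(3)$ is simply connected, no nontrivial connected double cover exists), so the spin structure restricts over the $\mathrm{SU}(3)$-reduction to a double cover classified by $H^1(M;\Z_2)$; its vanishing trivializes the cover, yielding a $\mathrm{Spin}(6)$-level $\mathrm{SU}(3)$-reduction $\mathcal{Q}$ and, through the constant $\mathrm{SU}(3)$-fixed spinor $\phi_0\in\Delta$, the global section $\phi$ as $\mathcal{Q}\times_{\mathrm{SU}(3)}\{\phi_0\}$. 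Either way one obtains the claimed correspondence (bijective up to the overall sign).
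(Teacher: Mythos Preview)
Your argument is correct and follows the same route the paper indicates: globalize the pointwise identification $\mathrm{SO}(6)/\mathrm{SU}(3)=\mathbb{P}(\Delta)=\mathbb{RP}^7$ to sections over $M$. The paper's justification is the single clause ``These formulas prove the following'', so it leaves the lifting from $\mathbb{P}(\Sigma M)$ to $S(\Sigma M)$ entirely implicit; your proof supplies precisely what is missing there, namely the $H^1(M;\Z_2)$ obstruction argument (and the equivalent principal-bundle reformulation) that explains where the simple-connectedness hypothesis is actually used.
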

The different types of $\mathrm{SU}(3)$-structures in the sense of Gray-Hervella can be 
characterized by certain spinoral field equation for the defining spinor
$\phi$. The first result of this type has been obtained by R. Grunewald in 1990. A spinor field
$\phi$ defined on a Riemannian spin manifold is called a real Killing spinor if it satisfies
the following first order differential equation
\bdm
\nabla^g_X \phi \ = \ \lambda \cdot X \cdot \phi \, , \quad \lambda = \mathrm{const} \in \R \, .
\edm
If $\lambda = 0$, the spinor field is simply parallel. 
Real Killing spinors are the eigenspinors of the Dirac operator realizing the
lower bound of the Dirac spectrum given by Th. Friedrich in 1980, see \cite{F80}. 
Now we can formulate the mentioned result:
\begin{thm}(see \cite{Gru90})
Let $(M,g)$ be a $6$-dimensional spin manifold admitting a non-trivial 
real Killing spinor. Then $M$ is nearly  K\"ahler.
Conversely, any simply connected nearly K\"ahler $6$-manifold admits non-trivial 
Killing spinor.
\end{thm}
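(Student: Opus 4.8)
The plan is to establish the equivalence by working out explicitly how the defining spinor $\phi$ and its associated $\mathrm{SU}(3)$-structure $(J_\phi,\omega_\phi)$ transform under the Levi-Civita connection, and to match the two conditions term by term. First I would fix a unit real Killing spinor $\phi$ satisfying $\nabla^g_X\phi=\lambda\, X\cdot\phi$ with $\lambda\neq 0$ and consider the almost complex structure $J:=J_\phi$ and metric $g$ that it induces via $J_\phi(X)\cdot\phi=J(X\cdot\phi)$, as described above. The goal of the first direction is to show $(\nabla^g_X J)X=0$. The natural route is to differentiate the defining relation $J_\phi(X)\cdot\phi=e_1\cdots e_6\cdot X\cdot\phi$ (recall $J=e_1\cdot e_2\cdots e_6$ on $\Delta$) in the direction of a vector field $Y$, using the Leibniz rule together with the Killing equation on $\phi$ and the fact that $J$ as an endomorphism of $\Delta$ is parallel (being built from a fixed algebraic expression). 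Carrying the Clifford multiplications through and using $\phi$'s Killing property should produce a Clifford-algebraic identity whose spinorial content encodes $(\nabla^g_Y J_\phi)(X)\cdot\phi$ in terms of $\lambda$ and cross-product-type expressions; antisymmetrizing or setting $Y=X$ then yields the nearly K\"ahler condition.

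The cleanest organizing principle here is to compute $\nabla^g_Y\omega_\phi$, where $\omega_\phi(X,Y,Z)=-(X\cdot Y\cdot Z\cdot\phi,\phi)$ is the fundamental $3$-form, since differentiating a scalar-valued expression avoids bookkeeping of frame-dependent terms. Differentiating and feeding in the Killing equation twice (once for each $\phi$) collapses most terms by the skew-symmetry of odd Clifford products against the spinor inner product, leaving an expression proportional to $\lambda$ times a $4$-form, or equivalently identifying $\nabla^g\omega_\phi$ with a specific $\mathrm{SU}(3)$-covariant tensor. Matching this against the Gray-Hervella decomposition $\Lambda^3(\R^6)=\mathcal W_1\oplus\mathcal W_3\oplus\mathcal W_4$ from the previous section, I would argue that the Killing condition forces the intrinsic torsion to live purely in the $\mathcal W_1$-component, which is precisely the nearly K\"ahler class. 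The constant $\lambda$ controls the overall scale, matching the constant-type property from Gray's theorem above.

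For the converse, I would run the argument in reverse: starting from a simply connected nearly K\"ahler $6$-manifold, the $\mathrm{SU}(3)$-structure determines (via the correspondence $\mathrm{SO}(6)/\mathrm{SU}(3)=\mathbb P(\Delta)$ established in the preceding proposition) a real spinor field $\phi$ of unit length, at least locally, and simple connectedness lets me choose a global consistent sign. I would then compute $\nabla^g_X\phi$ and decompose it using the pointwise splitting $\Delta=\R\phi\oplus\R\,J(\phi)\oplus\{X\cdot\phi\}$ recorded earlier. The nearly K\"ahler condition kills the components of $\nabla^g_X\phi$ along $\R\phi$ and $\R\,J(\phi)$ (these correspond to the metric having unit length and to $J$-invariance, which are automatic) and forces the remaining $\{Y\cdot\phi\}$-part to be exactly $\lambda\, X\cdot\phi$ for a single constant $\lambda$; the constancy of $\lambda$ is where Gray's constant-type result and the Einstein property are invoked.

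The main obstacle I anticipate is the precise translation between the endomorphism $J_\phi$ on $\R^6$ and the Clifford-algebra element $J$ on $\Delta$: keeping track of signs and of where Clifford multiplication by a vector anticommutes versus commutes with $J$, and correctly identifying which irreducible $\mathrm{SU}(3)$-piece a given spinorial expression represents. A secondary subtlety is verifying that $\lambda$ is genuinely constant rather than merely locally constant --- this uses the integrability (differentiated Killing equation, i.e.\ the curvature identity relating $\Ric$ to $\lambda^2$) and is exactly the point where the Einstein condition of Gray's theorem enters, so I would treat the constancy of $\lambda$ as the crux rather than a routine afterthought.
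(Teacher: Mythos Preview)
Your primary route for the forward direction---differentiate the defining relation $J_\phi(X)\cdot\phi = J(X\cdot\phi)$ in a direction $Y$, feed in the Killing equation, and set $Y=X$---is exactly what the paper does; its sketch carries this out in four lines of Clifford algebra, arriving directly at $(\nabla^g_X J_\phi)(X)\cdot\phi=0$, and stops there, treating only this implication.

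Your alternative organizing principle via $\nabla^g\omega_\phi$ and matching against the Gray--Hervella decomposition is a genuinely different, more representation-theoretic path that the paper does not take; it would work, but it trades a short direct Clifford computation for a classification argument, which is heavier than necessary here. The paper does not sketch the converse at all, so your outline for that direction (reconstructing $\phi$ from the $\mathrm{SU}(3)$-structure via $\mathrm{SO}(6)/\mathrm{SU}(3)=\mathbb{P}(\Delta)$ and decomposing $\nabla^g_X\phi$ along the splitting $\Delta=\R\phi\oplus\R\,J(\phi)\oplus\{X\cdot\phi\}$) goes beyond what is shown. One correction to that outline: the $\R\,J(\phi)$-component of $\nabla^g_X\phi$ is \emph{not} automatic from ``$J$-invariance''; it is a genuine $1$-form piece of the $\mathrm{SU}(3)$-intrinsic torsion (the $\mathcal{W}_5$-class, roughly the Lee form of the complex volume), and its vanishing is part of what the nearly K\"ahler hypothesis must deliver, not a tautology. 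Only the $\R\phi$-component is killed for free by $|\phi|=1$.
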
 
\noindent
We sketch the proof of the first statement. Suppose that $\phi$ is a Killing spinor,
$\nabla^g_X \phi= X \cdot \phi$. We differentiate the equation
$J_{\phi}(X) \cdot \phi = J(X \cdot \phi)$ :
\bdm
\nabla^g_Y(J_{\phi}(X))\cdot \phi + J_{\phi}(X) \cdot \nabla^g_Y \phi \, = \,
J(\nabla^g_YX \cdot \phi) + J(X \cdot \nabla^g_Y \phi)  \, = \,
J_{\phi}(\nabla^g_YX) \cdot \phi + J(X \cdot \nabla^g_Y \phi) \ .
\edm    
This formula yields the derivative $\nabla^gJ_{\phi}$ :
\bdm
(\nabla^g_Y J_{\phi})(X) \cdot \phi \ = \ J(X \cdot \nabla^g_Y \phi) - 
J_{\phi}(X) \cdot \nabla^g_Y \phi \ = \ J(X\cdot Y \cdot \phi) - 
J_{\phi}(X) \cdot (Y \cdot \phi) \ .
\edm
In particular, for $X = Y$ we obtain
\begin{eqnarray*}
(\nabla^g_X J_{\phi})(X) \cdot \phi &=& - \|X \|^2 J(\phi) -
J_{\phi}(X) \cdot X \cdot \phi \ = \ - \|X \|^2 J(\phi) +
X \cdot J_{\phi}(X) \cdot \phi \\ 
&=& - \|X \|^2 J(\phi) + X \cdot J(X \cdot \phi) \ = \
- \|X \|^2 J(\phi) - X \cdot X \cdot J(\phi) \ = \ 0 \ . 
\end{eqnarray*}
Finally, the almost complex structure $J_{\phi}$ is nearly K\"ahler.
\begin{remark}
The spinor field equations for all other types of $\mathrm{SU}(3)$-structures have 
been discussed in the paper \cite{ACFH15} .
\end{remark}
\begin{example}
The $6$-dimensional sphere admits real Killing spinors. Indeed, fix a
constant spinor in the Euclidean space $\R^7$ and restrict it to $S^6$.
Then it becomes a real Killing spinor on the sphere. Moreover, this spinor
defines its standard nearly K\"ahler structure described before.  
\end{example}
\section{Non-homogeneous nearly K\"ahler manifolds}
%
Although it had been widely believed that non-homogeneous nearly K\"ahler manifolds
should exist, their explicit construction was an open problem for many years,
in contrary to
their 
odd-dimensional siblings, nearly parallel $G_2$-manifolds, had been much less reluctant to
provide inhomogeneous examples. On the path to a solution, several approaches had been tried 
that provided new insights into the shape and properties of nearly K\"ahler manifolds, but had
not brought the answer to the original problem. For example, nearly hypo structures allow the
construction of compact nearly K\"ahler structures with conical singularities \cite{FIMU08},
and infinitesimal deformations of nearly K\"ahler structures lead to interesting spectral problems
on Laplacians \cite{MS11}. Local homogeneous non-homogeneous examples of nearly K\"ahler
manifolds were described in \cite{CV15}.

The main breakthrough was obtained very recently by Foscolo and Haskins \cite{FH17}, which we shall now
shortly describe as it relates directly to our object of investigation, $S^6$:
\begin{thm}[Foscolo, Haskins]\label{fh}
There exists a non-homogeneous nearly  K\"ahler structure on $S^6$ and on $S^3\times S^3$.
\end{thm}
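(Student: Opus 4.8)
The plan is to follow Foscolo and Haskins by constructing the nearly K\"ahler structures within the class of \emph{cohomogeneity one} $\mathrm{SU}(2)\times\mathrm{SU}(2)$-invariant structures, reducing the nearly K\"ahler partial differential equations to a system of ordinary differential equations in a single radial variable. First I would fix the symmetry group $G=\mathrm{SU}(2)\times\mathrm{SU}(2)$, which is six-dimensional, and a cohomogeneity one action on the six-manifold whose principal orbit is $P=G/K$ with $K\cong\mathrm{U}(1)$, so that $P$ is a five-dimensional homogeneous space. The two candidate manifolds $S^6$ and $S^3\times S^3$ are distinguished by the isotropy data at the two singular orbits: for $S^3\times S^3$ both collapses produce a three-sphere, while the relevant collapse pattern for $S^6$ fills in differently. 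Writing the open dense part as $(0,T)\times P$ with radial coordinate $t$, the entire construction then lives on this product.

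Second, I would describe a nearly K\"ahler structure as an $\mathrm{SU}(3)$-structure $(\omega,\Omega)$, with $\Omega=\psi^++i\,\psi^-$, satisfying the exterior differential system $d\omega=3\,\psi^+$ and $d\psi^-=-2\,\omega\wedge\omega$, together with the usual compatibility and normalisation relations $\omega\wedge\psi^\pm=0$ and $\psi^+\wedge\psi^-=\tfrac{2}{3}\,\omega^3$. On $(0,T)\times P$ the space of $G$-invariant forms is finite-dimensional, so an invariant $\mathrm{SU}(3)$-structure is encoded by finitely many functions of $t$ alone. Inserting this invariant ansatz into the exterior system collapses the nearly K\"ahler equations to a first-order nonlinear ODE system for these functions, with the algebraic compatibility relations surviving as constraints preserved by the flow.

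Third comes the analysis at the endpoints. Near each singular orbit a subgroup of $G$ collapses, and smoothness of the metric \emph{and} of the whole $\mathrm{SU}(3)$-structure across the collapsed orbit forces prescribed vanishing and boundary behaviour on the radial functions. A local power-series or center-manifold analysis near $t=0$ and $t=T$ then produces a finite-parameter family of solutions that close up smoothly at each end. The global problem is to choose the free parameters so that a trajectory leaving one singular orbit arrives smoothly at the other; this matching, equivalently the intersection of the relevant stable and unstable sets of the ODE flow, is where the real content lies.

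The hard part will be exactly this connection problem, because the ODE system is nonlinear and the boundary conditions at both ends are singular. I expect the decisive step to be a reformulation that renders the flow tractable --- isolating conserved quantities coming from the compatibility constraints, normalising by the overall scale, and reducing to a phase portrait in which one can run a shooting or topological-degree argument. Once a connecting solution is produced, I would identify the resulting smooth compact six-manifold with $S^6$ (respectively $S^3\times S^3$) from the singular-orbit data, and finally verify non-homogeneity by exhibiting a non-constant invariant --- for instance showing that the scalar curvature varies along $t$, or that the isometry group is strictly smaller than in the homogeneous case --- which distinguishes these solutions from the round (respectively homogeneous) nearly K\"ahler structure already constructed above.
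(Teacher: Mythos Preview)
Your proposal is correct and follows essentially the same cohomogeneity-one $\mathrm{SU}(2)\times\mathrm{SU}(2)$ strategy that the paper sketches (which is itself only an outline of Foscolo--Haskins, not a proof). The paper phrases the reduction slightly differently, via the induced \emph{nearly hypo} $\mathrm{SU}(2)$-structures on the principal orbit $S^2\times S^3$, whose invariant moduli form a smooth connected $4$-manifold on which the nearly K\"ahler equations become an ODE with a $2$-parameter family of local solutions; the matching step and its link to Calabi--Yau conifold desingularisation are then mentioned, but otherwise your outline and the paper's agree.
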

These are the first example of non-homogeneous compact  nearly  K\"ahler $6$-manifolds. 
Recall that Butruille \cite{B05} showed that the only homogeneous compact nearly  K\"ahler 
$6$-manifolds are $S^6$, $S^3\times S^3$, $\CP^3$ and the flag manifold $F_2$. The 
examples of L. Foscolo and M. Haskins are based on weakening of the assumption of homogeneity: 
they are cohomogeneity one, i.e., they admit an isometric action of a compact Lie group 
 such that generic orbits of the action are of codimension one. The Lie group considered in 
this case is $\mathrm{SU}(2)\times \mathrm{SU}(2)$ and the generic orbits are $S^2\times S^3$ which is 
motivated by results of Podesta and Spiro \cite{PS12} characterizing all possible groups 
and orbits for cohomogeneity one nearly  K\"ahler. In fact  L. Foscolo and M. Haskins 
state the following conjecture.
\begin{con}
The only simply connected cohomogeneity one compact nearly K\"ahler manifolds in dimension 
$6$ are the structures found in \cite{FH17} on $S^6$ and  $S^3\times S^3$.
\end{con}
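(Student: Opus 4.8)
The plan is to convert the classification into a problem about a single explicit system of ordinary differential equations and then to analyse that system globally. First I would invoke the structural result of Podest\`a and Spiro \cite{PS12}, which already restricts the possible symmetry data: for a simply connected compact cohomogeneity one nearly K\"ahler $6$-manifold the acting group is, up to cover, $\mathrm{SU}(2)\times\mathrm{SU}(2)$ and the principal orbit is diffeomorphic to $S^2\times S^3$, realised as a homogeneous space $\mathrm{SU}(2)\times\mathrm{SU}(2)/H$ with one-dimensional isotropy $H$. This reduces the classification to a fixed equivariant Ansatz, so that the entire nearly K\"ahler structure is encoded by a curve of $\mathrm{SU}(2)\times\mathrm{SU}(2)$-invariant tensors on the principal orbit, parametrised by the (one-dimensional) orbit space.

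Next I would write down the reduced equations. A nearly K\"ahler structure is the same as an $\mathrm{SU}(3)$-structure $(\omega,\psi^+,\psi^-)$ satisfying the exterior system $d\omega=3\psi^+$, $d\psi^-=-2\,\omega\wedge\omega$; on a cohomogeneity one manifold this restricts on each principal orbit to an invariant hypo ($\mathrm{SU}(2)$) structure, and the transverse derivative turns the two exterior equations into a first order evolution \emph{flow} for that structure (a \emph{nearly hypo} flow, in the sense used for the conical examples of \cite{FIMU08}). Because the invariant tensors on $S^2\times S^3$ form a \emph{finite-dimensional} space, the flow becomes an autonomous system of ODEs in finitely many functions of the orbit variable $t$. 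I would fix this system explicitly, together with the algebraic (reality and compatibility) constraints that it must preserve.

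The geometric content then sits at the endpoints. For a closed cohomogeneity one manifold the orbit space is a closed interval with a \emph{singular orbit} over each end, and the requirement that the metric and the $\mathrm{SU}(3)$-structure extend smoothly across each singular orbit imposes sharp boundary conditions: the collapsing directions, the rate of collapse, and the limiting tensors are all prescribed by the slice representation of the singular isotropy groups. The classification thus becomes a \emph{connection problem}: determine all trajectories of the ODE system satisfying the smooth-closure conditions at \emph{both} endpoints. I would first enumerate the admissible singular orbit types compatible with $S^6$ and with $S^3\times S^3$ (the two topologies arise from the two ways of filling in the collapsing orbit), and then shoot from one singular orbit, where the smoothness conditions cut the initial data down to a low-dimensional family modulo the residual symmetry.

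The hard part will be the \emph{global} analysis: proving that, apart from the homogeneous fixed points (the round $S^6=G_2/\mathrm{SU}(3)$ and $S^3\times S^3$, already the only homogeneous cases by \cite{B05}), the only complete trajectories meeting the correct singular orbit at the far end are exactly the Foscolo--Haskins solutions of \cite{FH17}. This is a uniqueness statement for a nonlinear, non-integrable ODE system and is precisely what keeps the statement a conjecture. The natural tools are to compactify the phase space, identify all its critical points together with their local stable/unstable behaviour, and search for a conserved or monotone (Lyapunov) quantity along the flow --- the Killing spinor reformulation, in which our manifold is the link of a cohomogeneity one $G_2$-holonomy cone, supplies candidate conserved quantities (volume-type functionals and Hitchin's Hamiltonian) and would be my main leverage. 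Even with these, excluding stray complete trajectories that are asymptotic to the wrong critical set, or that close up without ever reaching a second singular orbit, is the genuine obstacle, and would likely demand delicate estimates on the flow rather than a closed-form integration.
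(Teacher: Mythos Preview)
The statement you are attempting to prove is a \emph{conjecture}: the paper does not prove it, and explicitly attributes it as an open conjecture to Foscolo and Haskins. There is therefore no proof in the paper to compare your attempt against.

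Your proposal is not a proof either, and to your credit you say so yourself: you correctly identify the reduction via Podest\`a--Spiro \cite{PS12} to an $\mathrm{SU}(2)\times\mathrm{SU}(2)$-invariant nearly hypo flow on $S^2\times S^3$, the rephrasing as a connection problem for an ODE system with smooth-closure boundary conditions at the two singular orbits, and then you concede that ``excluding stray complete trajectories \ldots\ is the genuine obstacle'' and ``is precisely what keeps the statement a conjecture.'' That is an honest and accurate assessment. What you have written is the same framework that \cite{FH17} set up and used to establish \emph{existence} (Theorem~\ref{fh}); the missing ingredient for \emph{uniqueness} is exactly the global ODE analysis you flag, and neither the paper nor the existing literature supplies it. So there is no gap in your reasoning as such --- rather, your text is a well-informed outline of a strategy, not a proof, and no proof is currently known.
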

For proof of Theorem \ref{fh} they use another, equivalent (see for example \cite{R93}) 
description of nearly K\"ahler $6$-manifolds.
\begin{prop}\label{nKe}
A $6$-dimensional manifold $(M,g,J)$ is nearly  K\"ahler if and only if there exists a 
three holomorphic form $\omega\in \Lambda^{3,0}$ and a constant $a$ such that  the following conditions hold
\bdm
d\Omega \ = \ 12 \, a\,  \,\emph{Re}(\omega) \ , \quad
d\emph{Im}(\omega) \ = \ a \, \Omega\wedge\Omega \ ,
\edm
where $\Omega=g(J\cdot,\cdot)$ is the K\"ahler form.
\end{prop}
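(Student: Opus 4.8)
The plan is to treat the displayed pair of equations as encoding the intrinsic torsion of the $\mathrm{SU}(3)$-structure underlying $(M,g,J)$, and to show that they hold precisely when that torsion lies in the single nearly K\"ahler module. First I would fix the relevant $\mathrm{SU}(3)$-structure: the fundamental two-form is $\Omega=g(J\cdot,\cdot)$, and a complex volume form $\omega\in\Lambda^{3,0}$ exists on a nearly K\"ahler $6$-manifold because the first Chern class vanishes (Gray's theorem above) — equivalently, because the characteristic connection $\nabla^c$ preserves a complex volume form on the canonical $\mathrm{SU}(3)$-reduction. I would normalise $\omega$ compatibly with $\Omega$, so that $\Omega\wedge\omega=0$ and $\Omega^3$ is a fixed multiple of $\Rea(\omega)\wedge\Imm(\omega)$; this normalisation is exactly what pins down the numerical factor $12$ in the statement.

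For the forward direction I would compute $d\Omega$, $d\Rea(\omega)$ and $d\Imm(\omega)$ straight from the Killing spinor. By Grunewald's theorem above, a simply connected nearly K\"ahler $6$-manifold carries a real Killing spinor $\phi$ with $\nabla^g_X\phi=\lambda\,X\cdot\phi$, and the three forms are spinor bilinears: $\Rea(\omega)=\omega_\phi$, while $\Omega$ and $\Imm(\omega)$ are the analogous bilinears built from $\phi$ and $J\phi$. Differentiating each bilinear and substituting $\nabla^g_X\phi=\lambda\,X\cdot\phi$ converts the covariant derivative into a purely Clifford-algebraic expression, and antisymmetrising yields $d$. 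The Clifford identities then collapse the results into $d\Omega=\const\cdot\lambda\cdot\Rea(\omega)$, $d\Imm(\omega)=\const\cdot\lambda\cdot\Omega\wedge\Omega$ and $d\Rea(\omega)=0$, with the constants dictated by the chosen normalisation; this gives the two displayed equations with $a$ proportional to $\lambda$.

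For the converse I would invoke the Chiossi--Salamon description of the exterior derivatives of a general $\mathrm{SU}(3)$-structure, $d\Omega=\tfrac32\,\Imm(w_1\bar\omega)+w_3+w_4\wedge\Omega$ and $d\omega=w_1\,\Omega\wedge\Omega+w_2\wedge\Omega+\bar w_5\wedge\omega$, where $w_1\in\C$, $w_2$ is a primitive $(1,1)$-form, $w_3$ a real primitive form, and $w_4,w_5$ one-forms. The hypothesis $d\Omega=12a\,\Rea(\omega)$ exhibits $d\Omega$ as a multiple of $\Rea(\omega)$ alone, which forces $w_3=w_4=0$ and makes $w_1$ purely imaginary; in particular $\Rea(d\omega)=0$, so $d\Rea(\omega)=0$. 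The hypothesis $d\Imm(\omega)=a\,\Omega\wedge\Omega$ then makes $d\omega$ a pure multiple of $\Omega\wedge\Omega$, so injectivity of wedging with $\Omega$ gives $w_2=0$ and injectivity of wedging with $\omega$ gives $w_5=0$, with $w_1$ the prescribed constant. Only $w_1$ survives, so $\nabla^gJ$ is totally skew, i.e.\ $(\nabla^g_XJ)X=0$, and $(M,g,J)$ is nearly K\"ahler.

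The main obstacle, beyond the routine Clifford bookkeeping, is the converse: one must verify that the two scalar equations genuinely \emph{annihilate} $w_2,w_3,w_4,w_5$ rather than merely constrain them, and that $w_1$ is forced constant rather than only pointwise imaginary. This is where one uses that the nearly K\"ahler module appears exactly once in $\Lambda^3(\R^6)=\mathcal{W}_1\oplus\mathcal{W}_3\oplus\mathcal{W}_4$ — so that a $d\Omega$ proportional to the $\mathcal{W}_1$-representative $\Rea(\omega)$ leaves no room for $\mathcal{W}_3,\mathcal{W}_4$ — together with the Lefschetz-type injectivity statements above to dispose of the remaining $\mathrm{SU}(3)$ classes. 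Matching the precise constant $12$ is then the computation of the coefficient in $d\Omega=\tfrac32\,\Imm(w_1\bar\omega)$ under the normalisation fixed at the outset, which I would carry out last.
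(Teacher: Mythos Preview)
The paper does not actually prove this proposition: it is stated without argument and attributed to Reyes Carri\'on's thesis \cite{R93}, so there is no ``paper's own proof'' to compare your proposal against.

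Your outline is nonetheless a reasonable and standard route to the result. The converse via the Chiossi--Salamon parametrisation of $d\Omega$ and $d\omega$ is exactly how this is usually done, and your identification of the key points --- that the wedge maps $\Omega\wedge(-)$ and $\omega\wedge(-)$ are injective on the relevant primitive pieces, and that constancy of $a$ forces $w_1$ constant --- is correct. For the forward direction, the spinorial computation you sketch works, but note that Grunewald's theorem as stated in the paper requires simple connectedness, while the proposition does not; a cleaner route (and one that avoids the global existence question for $\omega$) is to work directly with the characteristic connection $\nabla^c$: since $\nabla^c$ has holonomy in $\mathrm{SU}(3)$ and $\nabla^cT^c=0$ (Kirichenko), the forms $\Omega$ and $\omega$ are $\nabla^c$-parallel, and the exterior derivatives are then purely algebraic expressions in the torsion $T^c$, which lies in the one-dimensional $\mathcal{W}_1$-module. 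Either way the mechanism is the same --- the intrinsic torsion sits in $\mathcal{W}_1$ if and only if the displayed equations hold --- and your proposal captures it.
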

This approach can be used to make explicit relation between nearly  K\"ahler $6$-manifolds and 
manifolds with $G_2$ holonomy which could have been suggested by the construction of the structure 
on $S^6$ from imaginary octonions. To see this, consider a $7$-dimensional Riemannian cone 
$\mathcal{C}(M)$ over a smooth compact $6$-manifold $M$ and assume that the holonomy of 
$\mathcal{C}(M)$ is contained in $G_2$. Then, $\mathcal{C}(M)$ is equipped with a $G_2$ structure, 
i.e., a $3$-form $\varphi$ and its Hodge dual $*\varphi$ with special properties. On the level 
$1$ of the cone (which can be identified with $M$) $\varphi$ and $*\varphi$ induce $\mathrm{SU}(3)$ 
structure $(\omega,\Omega)$ satisfying nearly  K\"ahler conditions from Proposition \ref{nKe}.

The main idea of the Foscolo and Haskings's proof
is to consider so-called nearly hypo structures which are the 
$\mathrm{SU}(2)$ structures induced on oriented hypersurfaces of nearly  K\"ahler $6$-manifolds 
from $\mathrm{SU}(3)$-structures. They describe the space of nearly hypo structures on $S^2\times S^3$ 
invariant under $\mathrm{SU}(2)\times \mathrm{SU}(2)$ 
action showing that it is a smooth connected $4$-manifold. 
Away from singular orbits, cohomogeneity one nearly  K\"ahler manifolds correspond to 
curves on this space satisfying some ODE equations. It turns out that there is a $2$-parameter 
family of solutions of the ODE, and to finish the proof they found conditions under which
 the solutions extend to compact nearly  K\"ahler $6$-manifold. It is important to note that 
this is closely related with desingularizations of Calabi-Yau conifold.
\end{document}